\documentclass[10pt]{amsart}

\usepackage{amsfonts}
\usepackage{amsmath}
\usepackage{amssymb}
\usepackage{url}
\usepackage{setspace}
\usepackage{tikz}
\usepackage{mathtools}
\usepackage{booktabs}
\usepackage[colorlinks=true,linkcolor=blue,citecolor=blue,urlcolor=blue]{hyperref}

\newtheorem{theorem}{Theorem}
\newtheorem{lemma}[theorem]{Lemma}
\newtheorem{corollary}[theorem]{Corollary}
\newtheorem{proposition}[theorem]{Proposition}
\newtheorem{conjecture}[theorem]{Conjecture}

\theoremstyle{definition}
\newtheorem{definition}[theorem]{Definition}
\newtheorem{example}[theorem]{Example}
\newtheorem{remark}[theorem]{Remark}
\newtheorem{conclusion}[theorem]{Conclusion}
\newtheorem{observation}[theorem]{Observation}

\numberwithin{equation}{section}

\newcommand{\dd}{\delta}

\newcommand{\eps}{\varepsilon}
\newcommand{\N}{\mathbb{N}}
\newcommand{\Z}{\mathbb{Z}}
\newcommand{\R}{\mathbb{R}}
\newcommand{\PP}{\mathbb{P}}

\newcommand{\E}{\mathcal{E}}

\newcommand{\FF}{\mathcal{F}}
\newcommand{\TT}{\mathcal{T}}
\newcommand{\LL}{\mathcal{L}}
\newcommand{\supp}{\operatorname{supp}}
\newcommand{\LR}{\Leftrightarrow}
\begin{document}

\title[On the sum-of-digits measures and Cusick's conjecture via stopped random walks]{On the sum-of-digits measures and Cusick's conjecture via stopped random walks}

     \author[D.\ Tar\l{}owski]{Dawid Tar\l{}owski, ORCID 0000-0002-6824-4568; \\
      Faculty of Mathematics and Computer Science,
     Jagiellonian University,   \L{}ojasiewicza 6, 30-348 Krak\'{o}w, Poland}
  \thanks{E-mail addresses: \texttt{dawid.tarlowski@uj.edu.pl} ; \texttt{dawid.tarlowski@gmail.com}}

\begin{abstract}
Let $s(n)$ denote the number of ones  in the binary expansion of a natural 
number $n\in\mathbb{N}$. For any $t\in\mathbb{N}$ and $d\in\mathbb{Z}$, 
let $\mu_t(d)$ denote the asymptotic density of the set of those natural numbers $n$ for which $s(n+t)-s(n)=d$. 
The $\mu_t$ are properly defined probability measures on $\Z$, 
and the Cusick conjecture states that $\mu_t(\mathbb{N})>\frac{1}{2}$ for any 
$t\in\mathbb{N}$. We investigate the  properties of the family $\{\mu_t\}_{t\in\N}$
 by reindexing the odd integers via a suitable partial order. 
This construction leads to a nonautonomous dynamics on pairs of probability
 measures on $\Z$, which represents the process of growing a tree. The associated stopped random walk allows a transparent structural description of those measures, including their support, 
symmetries, variance, and an asymptotic dichotomy between the central limit theorem and the almost sure convergence. Next, we focus on  the median-preserving property of this process, and show that the Cusick conjecture is a special case of a more general claim about the asymmetric evolution of the associated binary trees, which we support numerically.

\end{abstract}

\maketitle

\section{Introduction}
Let $s(n)$ denote the number of ones in the binary expansion of an integer $n\in\N$, namely,
\[\textstyle s(n)=\sum_{k=0}^{m}n_k,\]
where: \begin{equation}\textstyle n=\sum_{k=0}^{m}n_k\cdot 2^k\mbox{ and } n_k\in\{0,1\}\mbox{ for }k=0,1,\dots,m. \end{equation}

 Throughout the paper we write $\N=\{0,1,2,\dots\}$ and $\N^+=\{1,2,3,\dots\}$. Given any $t\in\mathbb{N}$ and $d\in\mathbb{Z}$, let $\mu_t(d)$ denote the asymptotic density of the set of those natural numbers $n$ for which $s(n+t)-s(n)=d$, i.e.
\begin{equation}\label{AL}
\mu_t(d)=\lim_{N\to\infty} \frac{1}{N}|\{ n< N \colon s(n+t) -s(n)=d \} |.
\end{equation}
It is well known that the limit \eqref{AL} exists, and $\mu_t$ is a properly defined probability measure on $\mathbb{Z}$. Moreover, all the measures $\mu_t$ have mean zero, i.e. $\sum\limits_{k\in\mathbb{Z}}k\cdot\mu_t(k)=0$, and the following recurrence relations are satisfied:
\begin{equation}\label{even}
\mu_{2t}(d)=\mu_t(d),\ t\in\N,\ d\in\Z,
\end{equation}
\begin{equation}\label{odd}
\mu_{2t+1}(d)=\frac{1}{2}\mu_t(d-1)+\frac{1}{2}\mu_{t+1}(d+1),\ t\in\N,\ d\in\Z.
\end{equation}
For more details, see Lemma 1 in \cite{besineau}, Lemma 2.1 in \cite{drmota}. This paper is motivated by the Cusick's conjecture which states that
\begin{equation}\label{Cc}
{\textstyle{\mu_t(\N)>\frac12\mbox{ for any }t\in\N^+.}}
\end{equation}
The simplicity of the above statement is rather deceptive - the full conjecture is still open, although various  results on the subject have been established (\cite{drmota,Emme1,Emme,Hosten,sp2022,sw2019,sw2023}). We briefly highlight some of them: paper \cite{drmota} has shown that for any $\eps>0$ we have $|\{t<T\colon \frac12<\mu_t(\N)<\frac{1}{2}+\eps\}|=T-O(\frac{T}{\log T})$ (the symbol $O(\cdot)$ stands for big $O$ notation) which implies that the asymptotic density of the set of $t\in\N$ which satisfy $\mu_t(\N)>\frac12$ equals to one. In \cite{Emme}, a central limit theorem is established for generic sequences $(\mu_{t_k})$ arising by sampling digits of $t_k$ from the balanced Bernoulli measure. Papers \cite{Hosten} and \cite{sw2023} have generalized this result, and additionally \cite{sw2023} has shown that $\mu_t(\N)>\frac12$ for any $t$ such that $(t)_2$ contains sufficiently many blocks of ones. Recently, \cite{sobolewski} has proposed the decomposition of the characteristic functions associated to $\mu_t$ into the sum of the corresponding components. Finally, we  mention that Cusick conjecture has several equivalent formulations, see Section 3 in \cite{drmota}, and is  connected to Tu-Deng conjecture \cite{CusickLi},\cite{TuTuDeng},\cite{sw2019}.

A full proof of Cusick's conjecture is likely to require a detailed structural understanding of the family $\{\mu_t\}_{t \in \N}$. We will show that, after reindexing the odd integers via an appropriate partial order $\preceq$,\  the family $\{\mu_t\}_{t\in 2\N+1}$ corresponds to the marginal distributions of a hierarchical martingale defined by a stopped random walk, governed by an explicit recursive dynamics. For convenience we will assume that the random walk starts from zero, that is, we will work with the system of measures $P_t$ which is related to $\mu_{t}$ by the  convolution: $\mu_t=\mu_1\ast P_t$. First we will show that the maximal chains of the poset $(2\N+1,\preceq)$ determine the nonautonomous dynamics  on  pairs of probability measures on $\mathbb{Z}$ which represents the growth of a planar binary tree, and next we will express it in the language of the stopped random walk. The marginal probability distributions of the associated martingale - namely the measures $P_t$ - exhibit several rather desirable properties: the monotonicity of the span of the support, the monotonicity of the variance with the explicit bounds (Theorem \ref{WAR}), the symmetries, and clear asymptotic behaviour:  the unbounded variance case satisfies the central limit theorem (by \cite{Hosten},\cite{sw2023}) while in  the case of bounded variance the trajectories of the martingale are convergent, and the possible asymptotic distributions are exactly the measures $\mu_{t}$ and their reflections (Theorem \ref{As}). At the end of the paper we focus on the median-preserving property of the martingale, and show that the Cusick conjecture is a special case of a more general statement concerning the asymmetric growth of the trees.  In the language of binary digits: if $P_t(\N)>\frac12$ for any odd $t=(t_1t_2\dots t_n)_2$ with $t_2=0$, then $\mu_t(\N)>\frac12$ for any $t\in\N$. We conjecture  that the side of the tree that carries mass greater than $\frac12$ at the first step of the growth, remains so  throughout the whole process (Conjecture \ref{Azyan}). This last claim is supported numerically, and left open.


\section{  The  evolution of sum of digits measures.}\label{evo}
\subsection{Order}
Let us start with the sequence $x_n=s(n+1)-s(n)$, $n\in\N$, which defines measure $\mu_1$. Let $$\tau_k=\min\{n\in\N\colon x_n=k\}\mbox{, where }k\in\mathbb{Z}_{\leq 1}=\{d\in\mathbb{Z}\colon d\leq 1\}.$$
It is not difficult to establish that $x_n$ is given by:
 $$\tau_k=2^{1-k}\mbox{ and } x_{\tau_k+i}=x_i \mbox{ for } 1\leq i <2^{1-k},\mbox{ where } k\in\mathbb{Z}_{\leq 1}.$$ The first elements of $\{x_n\}_{n\in\N}$ are:\\
\small{
$$\textbf{1},\ \  \textbf{0}, 1, \ \ \textbf{-1}, 1, 0, 1,\ \ \textbf{-2}, 1, 0, 1, -1, 1, 0, 1,\ \ \textbf{-3},1, 0, 1, -1, 1, 0, 1,-2, 1, 0, 1, -1, 1, 0, 1,\textbf{-4},\dots\\ $$}

The above sequence  is an elegant representation of a geometric distribution with mean zero, more precisely, the corresponding frequencies lead to the formula:
\begin{equation}\label{mu1}
\mu_1(k)=\left(\frac{1}{2}\right)^{2-k}\mbox{ for } k=1,0,-1,-2,\dots
\end{equation}
 
In this chapter we want to gain good insight into how the measures $\mu_t$ evolve from $\mu_1$. By \eqref{even}, only odd $t$ have to be considered. 
To see that, we will consider an appropriate partial order on the set of odd natural numbers. 

We start from describing the intuition behind the above idea. Roughly speaking, equations \eqref{even} and \eqref{odd} lead to the following tree-structured decomposition of a natural number $t$ \textbf{:}  If $t$ is even, divide it by $2$. If $t$ is odd and greater than one, split $t$ into two numbers: $\frac{t-1}{2}$ and $\frac{t+1}{2}$.  For every number that has appeared, repeat this operation  until one is reached\ \textbf{.}  We will reverse time in this decomposition, and omit the even numbers, thereby obtaining a more straightforward evolution of $\mu_t$.
The initial measure $\mu_1$ is already defined.  By \eqref{even} and \eqref{odd}, the $\mu_3$ is given by: 
\begin{center}
$\mu_3(k)=\frac{1}{2}\mu_1(k+1)+\frac{1}{2}\mu_1(k-1),\ k\in\mathbb{Z}$.
\end{center} 
The further evolution will be analysed with use of  the following order.
\begin{definition}\label{order}
Define two maps $L, R \colon 2\N+1 \to 2\N+1$ by
\begin{equation}\label{eq:left-right}
L(t) = 2t - 1, \qquad R(t) = 2t + 1.
\end{equation}
We define a partial order $"\preceq"$ on $2\N+1$ as follows:
 $s \preceq t$ if and only if either $s=t$, or there exists a finite sequence $w_1, \ldots, w_n \in \{L, R\}$
such that
\begin{equation}\label{order-def}
t =w_n \circ \cdots \circ w_1(s).
\end{equation}
\end{definition}
 From now on we will write 
$$T=2\N+1,$$
and we consider this set as equipped with $\preceq$.

\begin{example}\label{ex:tree-levels}
The first four levels of $(T, \preceq)$ are
depicted below.
\begin{center}
\begin{tikzpicture}[
  grow=right,
  level distance=20mm,
  sibling distance=8mm,
  every node/.style={font=\small},
  edge from parent/.style={draw, -{stealth}, thin},
  level 1/.style={sibling distance=32mm},
  level 2/.style={sibling distance=16mm},
  level 3/.style={sibling distance=8mm},
]
\node {$1$}
  child {node {$3$}
    child {node {$7$}
      child {node {$15$} edge from parent node[below] {\tiny $R$}}
      child {node {$13$} edge from parent node[above] {\tiny $L$}}
      edge from parent node[below] {\tiny $R$}
    }
    child {node {$5$}
      child {node {$11$} edge from parent node[below] {\tiny $R$}}
      child {node {$9$} edge from parent node[above] {\tiny $L$}}
      edge from parent node[above] {\tiny $L$}
    }
    edge from parent node[above] {}
  };
\end{tikzpicture}
\end{center}
\end{example}

 There is a direct relation between the representation \eqref{order-def} and the binary representation of an odd integer. Let $\beta \colon \{L, R\} \to \{0, 1\}$ be a mapping defined by: 
$$\beta(L) = 0\mbox{ and }\beta(R) = 1.$$
\begin{observation}\label{binarr}
 The binary representation of $t=w_k \circ \cdots \circ w_1(3)$ is given by:
$$  t = (1\, \beta(w_1)\, \beta(w_2) \cdots \beta(w_k)\, 1)_2.$$
\end{observation}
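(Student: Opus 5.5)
The natural approach is induction on the number $k$ of applied maps, using the fact that both $L$ and $R$ act on binary expansions in a simple way. For $k=0$ the statement reads $t=3=(11)_2$, which is correct: the word $\beta(w_1)\cdots\beta(w_k)$ is empty and the expansion is $(1\,1)_2$.

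For the inductive step, assume $t=w_k\circ\cdots\circ w_1(3)=(1\,b_1\cdots b_k\,1)_2$ with $b_i=\beta(w_i)$, and let $t'=w_{k+1}(t)$. We need $t'=(1\,b_1\cdots b_k\,b_{k+1}\,1)_2$, where $b_{k+1}=\beta(w_{k+1})$.

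If $w_{k+1}=R$, then $t'=2t+1$. Doubling shifts the binary string left, giving $(1\,b_1\cdots b_k\,1\,0)_2$, and adding one sets the last digit, giving $(1\,b_1\cdots b_k\,1\,1)_2$. Since $\beta(R)=1$, this is the required form.

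If $w_{k+1}=L$, then $t'=2t-1=2(t-1)+1$. Because $t$ ends in the digit $1$, we have $t-1=(1\,b_1\cdots b_k\,0)_2$. Then $2(t-1)+1=(1\,b_1\cdots b_k\,0\,1)_2$, which matches $\beta(L)=0$. There is no borrowing issue, because the last digit of $t$ is always $1$; this holds for $3$ and is preserved by the induction itself.

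One point of notation needs care. In $w_n\circ\cdots\circ w_1(s)$ the map $w_1$ is applied first, so $\beta(w_1)$ is the digit nearest the leading $1$. Writing each step as appending a digit before the final $1$ keeps this ordering consistent. No step is a genuine obstacle; the only subtlety is the borrow in the $L$ case, and the invariant that $t$ is odd handles it.
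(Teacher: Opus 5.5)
Your proof is correct and takes the same approach as the paper: induction on $k$. The paper phrases the inductive claim as the closed formula $t = 2^{k+1} + \sum_{i=1}^{k}\beta(w_i)\,2^{k+1-i} + 1$ rather than as digit-string manipulation, and your treatment of the $L$ case via $2t-1 = 2(t-1)+1$, using the trailing $1$ of $t$, is the content of that inductive step.
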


To prove the above observation it is enough to show by induction that $t =w_k \circ \cdots \circ w_1(3)$ satisfies:
$$t = 2^{k+1} + \sum_{i=1}^{k} \beta(w_i)\,
2^{k+1-i} + 1.$$

Throughout the paper,  the set of all finite words $\{L,R\}^{\star}$ equipped with the standard prefix order will be identified with the set $T\setminus\{1\}$ with the order induced from  $T$. More precisely, define:
$$\{L,R\}^\star =\bigcup\limits_{n\in\N} \{L,R\}^n,\mbox{ where } \{L,R\}^0:=\{\eps\} \ \ \  (\eps - \mbox{an empty word }).$$
To the elements of $\{L,R\}^\star$ we will refer as words rather than sequences, and thus given $w\in\{L,R\}^\star$ determined by $w_1,\dots,w_n\in \{L,R\}$ we will write $w=w_1 \dots w_n$ rather than $w=(w_1,\dots,w_n)$. The length of the word $w$ will be denoted by $\ell(w)$.  Given $t\in T$, we will write: $tL:=L(t)$ and $tR:=R(t)$. More generally, given $w_1\dots w_n\in\{L,R\}^\star$, 
$$tw_1\dots w_n:=w_n\circ\dots\circ w_1 (t).$$
With the above notation, the following identification (a bijection) arises naturally:
$$h\colon \{L,R\}^\star\longrightarrow T\setminus\{1\},\mbox{ where }h(\varepsilon):=3\mbox{ and }$$
$$h\colon  w_1 \dots w_n\longmapsto 3w_1\dots w_n=w_n\circ\dots\circ w_1 (3).$$
We will write, for instance, $RR=15$ and $RRRLR=123$. With this notation, order $\preceq$ is consistent with the standard prefix order: $w_1w_2\dots 
w_n\preceq v_1v_2\dots v_m$ if and only if $n\leq m$ and $w_i=v_i$ for any $i\leq n$. \\

Given an infinite sequence $w=(w_1,w_2,\dots)\in\{L,R\}^{\N}$, we will write 
\begin{equation}\label{www}
w(-1):=1,\ w(0):=3,\ w(t):=w_1\dots w_t\in T,\ t\in\N^+.
\end{equation}
With the above notation, given any $w\in\{L,R\}^{\N}$, the sequence $\{w(t)\}_{t=-1}^{\infty}$ is the maximal chain in $(T,\preceq)$. We will analyze how
the sequences $w\in\{L,R\}^\N$ govern the evolution of:
$$\{\mu_{w(t)}\}_{t=-1}^{\infty}.$$

\subsection{The dynamics of $\mu_{w(t)}$}
The set of all probability measures on $\Z$ will be identified with the set $\Delta(\Z)\subset l^1(\Z)$ given by:
$$\Delta(\Z)=\{\ (P(k))_{k\in\Z}\ \colon\ P(k)\geq 0\mbox{ for all }k, \mbox{ and } \sum\limits_{k\in\Z}P(k)=1\}.$$
Set $\Delta(\Z)$ is naturally equipped with the topology of pointwise convergence which coincides with the topology induced from the space $(l^1(\Z),||\cdot||_1)$, where  $||P||_1=\sum\limits_{k\in\N}|P(k)|$. The set of centered probability measures on $\Z$ will be identified with the set:
$$\Delta_0(\Z)=\{\ P\in \Delta(\Z)\colon\  \sum\limits_{k\in\Z}k\cdot P(k)=0\}.$$
Above, by writing $\sum\limits_{k\in\Z}k\cdot P(k)=0$ we implicitly assume that the first moment exists: $\sum\limits_{k\in\Z}|k|\cdot P(k)<\infty$. Now, let $\{\sigma_d\}_{d\in\Z}$ denote the family of shift operators:
$$\sigma_d\colon \Delta(\Z)\ni (P(k))_{k\in\Z}\longmapsto  (P(k-d))_{k\in\Z}\in \Delta(\Z).$$
 We will write 
\begin{equation}\label{SLR}
\sigma_L:=\sigma_{-1}\mbox{ and }\sigma_R:=\sigma_{1}.
\end{equation}

\begin{definition}\label{Phi}
Define the operation $\Phi \colon \Delta_0(\Z) \times\Delta_0(\Z) \to\Delta_0(\Z)$, by
\[
  \Phi(\mu, \nu) = \frac{1}{2} \sigma_L(\mu) + \frac{1}{2} \sigma_R(\nu).
\]
\end{definition}
Any $w\in\{L,R\}^\N$  determines the sequence $\{\mu_{w(n)}\}_{n=-1}^{\infty}$ through the recursion based on the map $\Phi$.  For $t=2s+1\geq 3$, define:
\begin{equation}\label{dee}
\mu^L_{t}=\mu_{s+1}\mbox{ and }\mu^R_{t}=\mu_{s},
\end{equation}
so we have,  by \eqref{odd},
 \begin{equation}\label{P}
\mu_t=\Phi(\mu^L_t,\mu^R_t).
\end{equation}
For instance, $\mu^L_3=\mu_2=\mu_1$, and $\mu^R_3=\mu_1$. Fix $w\in\{L,R\}^\N$. With notation \eqref{www}: 
$$\mu_3=\mu_{w(0)}=\Phi(\mu^L_{w(0)},\mu^R_{w(0)})=\Phi(\mu_1,\mu_1).$$

Further dynamics is governed by the two mappings: 
$$\Phi_L, \Phi_R\colon \Delta_0(\Z)\times \Delta_0(\Z) \to \Delta_0(\Z)\times \Delta_0(\Z),$$ where
$$\Phi_L(\mu,\nu)=(\Phi(\mu,\nu),\nu)\mbox{ and }\Phi_R(\mu,\nu)=(\mu,\Phi(\mu,\nu)).$$
Equations \eqref{even} and \eqref{odd} imply that the sequence  $(\mu^L_{w(n)},\mu^R_{w(n)})_{n=0}^{\infty}$ is the trajectory of the following nonautonomous dynamical system:
\begin{equation}\label{System}
(\Delta_0(\Z)\times \Delta_0(\Z), \{\Phi_{w_n}\}_{n\geq1}),
\end{equation}
see below.
\begin{proposition}\label{TT}
For any $w=(w_1,w_2,\dots)\in\{L,R\}^\N$,
 $$(\mu^L_{w(n+1)},\mu^R_{w(n+1)})=\Phi_{w_{n+1}}\left((\mu^L_{w(n)},\mu^R_{w(n)})\right),\ n\geq 0.$$
 In other words,  $(\mu^L_{w(n)},\mu^R_{w(n)})_{n=0}^{\infty}$ is the trajectory of \eqref{System} which starts from $(\mu_1,\mu_1)$. 
\end{proposition}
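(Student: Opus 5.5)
Set $t=w(n)=2s+1$, so that $\mu^L_t=\mu_{s+1}$ and $\mu^R_t=\mu_s$ by \eqref{dee}. The proof reduces to computing the parent pair of the two children $L(t)=2t-1$ and $R(t)=2t+1$ directly from \eqref{dee}, and then using \eqref{even} and \eqref{P} to recognise the result as $\Phi_L$ or $\Phi_R$ applied to $(\mu_{s+1},\mu_s)$. The base case is the already recorded identity $(\mu^L_3,\mu^R_3)=(\mu_2,\mu_1)=(\mu_1,\mu_1)$. Induction along the chain then gives the whole trajectory, although each step is in fact a local identity at an arbitrary odd $t\ge 3$.

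\emph{Case $w_{n+1}=L$.} Here $w(n+1)=2t-1=2(t-1)+1$, so by \eqref{dee}
\[
\mu^L_{2t-1}=\mu_{t}, \qquad \mu^R_{2t-1}=\mu_{t-1}=\mu_{2s}=\mu_s ,
\]
where the last equality is \eqref{even}. By \eqref{P}, $\mu_t=\Phi(\mu^L_t,\mu^R_t)$. Hence
\[
(\mu^L_{2t-1},\mu^R_{2t-1})=\bigl(\Phi(\mu^L_t,\mu^R_t),\mu^R_t\bigr)=\Phi_L(\mu^L_t,\mu^R_t).
\]

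\emph{Case $w_{n+1}=R$.} Here $w(n+1)=2t+1$, so by \eqref{dee}
\[
\mu^L_{2t+1}=\mu_{t+1}=\mu_{2s+2}=\mu_{s+1}=\mu^L_t, \qquad \mu^R_{2t+1}=\mu_t=\Phi(\mu^L_t,\mu^R_t),
\]
which is exactly $\Phi_R(\mu^L_t,\mu^R_t)$.

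The only points requiring care are bookkeeping ones. First, $t\ge 3$ is needed so that \eqref{dee} applies; this holds because the chain starts at $w(0)=3$. Second, $\Phi$ must be well defined on the relevant measures. All $\mu_t$ lie in $\Delta_0(\Z)$ since they have mean zero, and first moments exist: $\mu_1$ is geometric, and \eqref{odd} preserves finite first moments. One should also check that $\Phi$ maps $\Delta_0(\Z)\times\Delta_0(\Z)$ into $\Delta_0(\Z)$, which holds because the mean of $\Phi(\mu,\nu)$ is $\tfrac12(-1)+\tfrac12(1)=0$.

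There is no real obstacle. The main subtlety is matching the convention in \eqref{dee}, where $\mu^L$ is paired with $\mu_{s+1}$ and the $\sigma_L$ shift, against \eqref{odd}. Writing out $\Phi(\mu_{s+1},\mu_s)(d)=\tfrac12\mu_{s+1}(d+1)+\tfrac12\mu_s(d-1)$ shows that it agrees with \eqref{odd}, i.e. with \eqref{P}.
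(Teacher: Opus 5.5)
Your proof is correct and follows the paper's argument. You identify the parent pair of $2t\mp 1$ via \eqref{dee}, collapse the even index using \eqref{even}, and recognise $\mu_t=\Phi(\mu^L_t,\mu^R_t)$ from \eqref{P}. The paper writes out only the $L$ case and calls the $R$ case similar, so your explicit $R$ case and your check that $\Phi(\mu_{s+1},\mu_s)$ agrees with \eqref{odd} are correct additions.
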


\begin{proof}
Fix $w\in\{L,R\}^\N$, and $n\in\N$. Assume that $w(n)=2s+1\geq 3$ so we have $\mu^L_{w(n)}=\mu_{s+1}$ and $\mu^R_{w(n)}=\mu_{s}$. 
The cases $w_{n+1}=L$ and $w_{n+1}=R$ are similar, and we will  assume that $w_{n+1}=L$. We have:  $w(n+1)=L(2s+1)=4s+1=(2s+1)+2s$, and hence 
$$\mu_{w(n+1)}^L=\mu_{2s+1}=\mu_{w(n)}\mbox{ and }\mu_{w(n+1)}^R=\mu_s=\mu_{w(n)}^R.$$
 Thus, by \eqref{P} and by the definition of $\Phi_L$, $(\mu_{w(n+1)}^L,\mu_{w(n+1)}^R)=\Phi_L(\mu_{w(n)}^L,\mu_{w(n)}^R).$
\end{proof}

Proposition \ref{TT} allows us to write shortly:
$$\mu_{w(n)}=\Phi_{w(n)}(\mu_1), n\geq 0,$$
where  $\Phi_{w(n)}\colon \Delta_0(\Z)\to \Delta_0(\Z)$ is defined by:
$$\Phi_{w(0)}(\mu)=\Phi(\mu,\mu)\mbox{ and }   \Phi_{w(n)}(\mu):=\Phi\circ\Phi_{w_n}\circ\dots\circ\Phi_{w_1}(\mu,\mu).$$

For convenience, in next sections we will focus on the probability measures:
\begin{equation}\label{Pt}
P_{w(n)} = \Phi_{w(n)}(\delta_0),\ n\geq 0, 
\end{equation} 
rather than on $\mu_{w(n)}= \Phi_{w(n)}(\mu_1)$, which is justified by Observation \ref{O2} and equation \eqref{dc}. 
 Given $\mu,\nu\in\Delta(\Z)$, let $\mu\ast\nu$ denote the convolution of measures $\mu$ and $\nu$, i.e. :
$$(\mu\ast\nu)(d)=\sum\limits_{k\in\Z}\mu(k)\cdot\nu(d-k)=\sum\limits_{k\in\Z}\mu(k)\cdot (\sigma_{k}\nu)(d).$$
\begin{observation}\label{O2}
Fix $w\in\{L,R\}^\N$, and let $\mu\in \Delta_0(\Z)$. We have:
  \begin{equation}\Phi_{w(n)}(\mu)=\mu\ast \Phi_{w(n)}(\delta_0),\ n\geq0.
\end{equation}
\end{observation}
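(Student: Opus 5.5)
The plan is to prove a slightly stronger statement at the level of pairs, by induction on $n$. Along the way we also check that convolution with a centered measure keeps us inside $\Delta_0(\Z)$. The basic observation is that all the building blocks commute with convolution by a fixed $\mu\in\Delta_0(\Z)$. First, shifts commute with convolution: $\sigma_d(\mu\ast\nu)=\mu\ast\sigma_d(\nu)$. Indeed,
\[
(\mu\ast\sigma_d\nu)(k)=\sum_j\mu(j)\nu(k-d-j)=(\mu\ast\nu)(k-d).
\]
Second, convolution is bilinear with respect to convex combinations. From these two facts we get, for any $\alpha,\beta\in\Delta_0(\Z)$,
\[
\Phi(\mu\ast\alpha,\mu\ast\beta)=\tfrac12\sigma_L(\mu\ast\alpha)+\tfrac12\sigma_R(\mu\ast\beta)=\mu\ast\Phi(\alpha,\beta).
\]
As a consequence, $\Phi_L(\mu\ast\alpha,\mu\ast\beta)=(\mu\ast\Phi(\alpha,\beta),\mu\ast\beta)$, which is the componentwise convolution of $\mu$ with $\Phi_L(\alpha,\beta)$, and the same holds for $\Phi_R$. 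We also note that $\mu\ast\alpha$ has finite first moment and mean equal to the sum of the means, namely $0$. So everything remains in $\Delta_0(\Z)$ and the maps are well defined on the convolved arguments.

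Next, we write $(\alpha_n,\beta_n):=\Phi_{w_n}\circ\dots\circ\Phi_{w_1}(\delta_0,\delta_0)$, with $(\alpha_0,\beta_0)=(\delta_0,\delta_0)$. The claim to prove by induction on $n$ is
\[
\Phi_{w_n}\circ\dots\circ\Phi_{w_1}(\mu,\mu)=(\mu\ast\alpha_n,\mu\ast\beta_n).
\]
The base case $n=0$ is $\mu=\mu\ast\delta_0$. The inductive step is exactly the commutation identity for $\Phi_{w_{n+1}}$ from the first paragraph. Applying $\Phi$ once more and using the same identity gives
\[
\Phi_{w(n)}(\mu)=\Phi(\mu\ast\alpha_n,\mu\ast\beta_n)=\mu\ast\Phi(\alpha_n,\beta_n)=\mu\ast\Phi_{w(n)}(\delta_0).
\]
This also covers $n=0$, where $\Phi(\mu,\mu)=\mu\ast\Phi(\delta_0,\delta_0)$.

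There is no real obstacle here. The only point needing care is the justification of rearranging the sums in the shift–convolution identity and in the first-moment computation. Both are handled by absolute convergence: all terms are nonnegative for the measures, and the first moments are finite for the mean.
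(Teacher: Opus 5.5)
Your proof is correct, but it is organized differently from the paper's.

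The paper expands $\mu=\sum_k p_k\delta_k$ and uses that the composite map $\Phi_{w(n)}$ is linear, continuous and commutes with shifts. This gives $\Phi_{w(n)}(\mu)=\sum_k p_k\,\sigma_k\bigl(\Phi_{w(n)}(\delta_0)\bigr)=\mu\ast\Phi_{w(n)}(\delta_0)$. In other words, it reduces to point masses and reads off the convolution kernel as the image of $\delta_0$.

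You never decompose $\mu$. Instead you prove that each elementary map $\Phi$, $\Phi_L$, $\Phi_R$ is equivariant under $\nu\mapsto\mu\ast\nu$, using $\sigma_d(\mu\ast\nu)=\mu\ast\sigma_d\nu$ and bilinearity, and then you induct along the word.

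Both arguments rest on the same two facts, linearity and shift-equivariance. What your route buys:
\begin{itemize}
\item It stays inside $\Delta_0(\Z)$ the whole time. The paper's argument tacitly extends $\Phi_{w(n)}$ to $l^1(\Z)$: it evaluates it at $\delta_k\notin\Delta_0(\Z)$ for $k\neq 0$, and $\Delta_0(\Z)$ is not a linear space. It also needs continuity to pass linearity through the infinite sum.
\item Your pair-level induction yields slightly more. Both components of the trajectory started at $(\mu,\mu)$ are obtained by convolving with $\mu$ the components of the trajectory started at $(\delta_0,\delta_0)$.
\end{itemize}
The paper's route, in turn, is shorter and more conceptual: it identifies $\Phi_{w(n)}$ as a translation-invariant operator, hence a convolution.
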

\begin{proof}
 Let us write $\mu=\sum\limits_{k\in\Z}p_k\delta_k$. The mapping $\Phi_{w(n)}\colon\Delta_0(\Z)\to\Delta_0(\Z)$, defined by compositions of linear combinations of shifts,  is linear, continuous, and commutes with shifts. Hence, by $\sigma_{k}\dd_0=\dd_k$,
$$\Phi_{w(n)}\left(\sum\limits_{k\in\Z}p_k\delta_k\right)=\sum\limits_{k\in\Z}p_k\cdot\Phi_{w(n)}(\delta_k)=\sum\limits_{k\in\Z}p_k\cdot\Phi_{w(n)}(\sigma_{k}\dd_0)=$$
$$=\sum\limits_{k\in\Z}p_k\cdot \sigma_{k}(\Phi_{w(n)}(\dd_0))=\sum\limits_{k\in\Z}\mu(k)\cdot \sigma_{k}(\Phi_{w(n)}(\dd_0))=\mu\ast\Phi_{w(n)}(\dd_0).$$
\end{proof}

\begin{corollary}\label{RR}
For any $w\in\{L,R\}^\N$,  by Observation \ref{O2}, $\mu_{w(n)}=\mu_1\ast P_{w(n)},$
where $P_{w(n)}=\Phi_{w(n)}(\dd_0)$. By $\eqref{mu1}$,
\begin{equation}\label{cd}
\mu_{w(n)}(d)=\sum\limits_{k\leq 1} \left(\frac12\right)^{2-k}\cdot P_{w(n)}(d-k),\ d\in\Z.
\end{equation}
\end{corollary}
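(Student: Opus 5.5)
The corollary is obtained by combining Proposition \ref{TT}, Observation \ref{O2} and the explicit formula \eqref{mu1}.

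\textbf{Step 1.} Fix $w\in\{L,R\}^\N$ and $n\geq 0$. I first check that $\mu_{w(n)}=\Phi_{w(n)}(\mu_1)$.
\begin{itemize}
\item For $n=0$ this is just \eqref{P} with $t=3$, since $\mu^L_3=\mu^R_3=\mu_1$.
\item For $n\geq1$, Proposition \ref{TT} shows that $(\mu^L_{w(n)},\mu^R_{w(n)})$ equals $\Phi_{w_n}\circ\dots\circ\Phi_{w_1}(\mu_1,\mu_1)$. Applying $\Phi$ and using \eqref{P} again gives $\mu_{w(n)}=\Phi_{w(n)}(\mu_1)$.
\end{itemize}
We need $\mu_1\in\Delta_0(\Z)$ for $\Phi_{w(n)}(\mu_1)$ to make sense. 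This holds because $\mu_1$ is the geometric law \eqref{mu1}: it has a finite first moment, and its mean is zero, as recalled in the Introduction.

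\textbf{Step 2.} Apply Observation \ref{O2} with $\mu=\mu_1$. This gives $\Phi_{w(n)}(\mu_1)=\mu_1\ast\Phi_{w(n)}(\dd_0)=\mu_1\ast P_{w(n)}$, which is the first claim.

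The observation is stated on $\Delta_0(\Z)$, and $\dd_0\in\Delta_0(\Z)$ trivially. One should also note that $\Phi$ maps $\Delta_0\times\Delta_0$ into $\Delta_0$. This holds because the shifts $\sigma_{\pm1}$ change the mean by $\mp1$, and averaging the two shifted measures with weights $\tfrac12,\tfrac12$ restores mean $0$. Hence $P_{w(n)}$ is well defined in $\Delta_0(\Z)$.

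\textbf{Step 3.} Expand the convolution using the definition:
\[
(\mu_1\ast P_{w(n)})(d)=\sum_{k\in\Z}\mu_1(k)\,P_{w(n)}(d-k).
\]
By \eqref{mu1}, $\mu_1(k)=0$ for $k\geq2$ and $\mu_1(k)=(\tfrac12)^{2-k}$ for $k\leq1$. Substituting these values gives \eqref{cd}.

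There is no real obstacle in this argument. The only point needing care is the well-definedness just discussed: the maps must stay inside $\Delta_0(\Z)$ so that Observation \ref{O2} applies.
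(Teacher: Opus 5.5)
Your proposal is correct and follows the paper's route: obtain $\mu_{w(n)}=\Phi_{w(n)}(\mu_1)$ from Proposition \ref{TT} and \eqref{P}, apply Observation \ref{O2} with $\mu=\mu_1$, and expand the convolution using \eqref{mu1}. There is one harmless sign slip: since $\sigma_d\delta_0=\delta_d$, the shift $\sigma_{\pm1}$ changes the mean by $\pm1$, not $\mp1$, and $\Phi$ still preserves mean zero either way.
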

 One may check by direct computation that the deconvolution formula is:
\begin{equation}\label{dc}
P_{w(n)}(d)= 2\cdot\mu_{w(n)}(d+1)-\mu_{w(n)}(d+2),\ d\in\Z.
\end{equation}
Equations \eqref{cd} and \eqref{dc} allow us to switch between $\mu_t$ and $P_t$ when necessary. The definition of $P_t$ given by $\eqref{Pt}$ is consistent with the following system:
\begin{equation}\label{system}
P_1=\dd_0,\ P_{2t}=P_t,\ P_{2t+1}=\Phi(P_{t+1},P_t),
\end{equation}
by which we may consider $\{P_t\}_{t\in\N}$ instead of $\{P_t\}_{t\in T}$ whenever convenient.

\section{Growing trees}\label{trees}

Compared to the standard dynamics behind equations \eqref{even} and \eqref{odd}  ( Proposition 2.5 in \cite{Emme}), the recursion given by Proposition \ref{TT} represents the process of growing a tree. We will start from the natural connection between the planar binary trees, \cite{Stanley}, and the bounded stopping times, \cite{Doob}. We will show that measures $P_{t}$ given by \eqref{Pt} are naturally embedded in the simple symmetric random walk $S_n$, namely, for any $t\in T$ there is a finite stopping time $\tau_t$ such that $P_t$ is the probability distribution of $S_{\tau_t}$.  A reader interested in the embedding problems related to the simple random walk is referred to \cite{Cox}, see also the original Skorohod embedding problem, \cite{Skorohod}.
\subsection{Assumptions and notation.}
 Let $\TT$ be the smallest set satisfying:
\begin{enumerate}
\item[\textup{(i)}] $\bullet \in \TT$,
\item[\textup{(ii)}] if $T_-, T_+ \in \TT$, then $[T_-, T_+] \in \TT$. 
\end{enumerate}

The elements of $\TT$ are \emph{full planar binary trees},  to which we shall refer simply as trees.. The \emph{height} of a tree $T \in \TT$, denoted by $|T|$, is defined
by : $|\bullet| = 0$, and  $|[T_-, T_+]| = \max\{|T_-|, |T_+|\} +1.$
The set of binary trees of height at most~$n$ will be denoted by
\[
  \TT_n = \{ T \in \TT : |T| \leq n \}
\]

The  set $\TT$ is partially ordered by the following relation:  

\begin{enumerate}
\item[\textup{(i)}] $\bullet \sqsubseteq T$ for every
  $T \in \TT$.
\item[\textup{(ii)}] $[S_-, S_+] \sqsubseteq [T_-, T_+]$ if and
  only if $S_- \sqsubseteq T_-$ and $S_+ \sqsubseteq T_+$.
\end{enumerate}

Roughly speaking, $S \sqsubseteq T$ iff $S$ is obtained from $T$ by replacing some subtrees with "$\bullet$" (with leaves). \\



 From now on, assume that the triple $(\Omega, \FF, \PP)$ is the canonical probability space with the balanced Bernoulli measure:
\begin{itemize}
\item $\Omega=\{-1,1\}^\N$,
\item $\FF$ is the $\sigma-$algebra of cylinder sets
\item $\PP=\bigotimes_{n=1}^{\infty} P_n$ is the product of $P_n=\frac12\dd_{-1}+\frac12\dd_1$
\end{itemize}

 The probability distribution of a random variable $X\colon\Omega\to\Z$ will be denoted by $\LL(X)$, that is, $\LL(X)\in\Delta(\Z)$ is given by: $\LL(X)(d)=\PP[X= d]$, $d\in\Z$. 

 Let $\xi_i\colon\Omega\to\{-1,1\}$, $i\in\N^+$, denote the coordinate variables: $\xi_i(\omega)=\omega_i$, $\omega\in\Omega$. Obviously $\{\xi_i\}_{i=1}^\infty$ are independent, with:
$$\PP[\xi_i = +1] = \PP[\xi_i = -1] = \tfrac{1}{2},\ i\in\N^+.$$  
 The simple random walk $\{S_n\}_{n\in\N}$ is defined by:
$$S_0=0\mbox{ and }S_n=\sum\limits_{i=1}^n\xi_i, \ n\in\N^+.$$
 The natural filtration of the process $\xi=(\xi_i)_{i\in\N}$ will be denoted by:
$$\FF_0 = \{\varnothing, \Omega\}\mbox{ and } \FF_n = \sigma(\xi_1, \ldots, \xi_n)\mbox{ for }n \geq 1.$$

A random variable $\tau \colon \Omega \to \{0, 1, 2, \dots\}$ is a \emph{stopping time} with respect to $(\FF_n)_{n \geq 0}$ iff $\{\tau \leq n\} \in \FF_n$ for every $n \geq 0$. In other words,  $\tau \colon \{-1,1\}^{\N}\to \N$ is a stopping time if $\tau$ is a Borel function with the 
following property:  if $\omega\in\{-1,1\}^\N$ and 
$\tau(\omega) = k$, then $\tau(\omega')=k$ for any $\omega'\in\{-1,1\}^{\N}$ with $(\omega_1,\dots,\omega_k)=(\omega'_1,\dots,\omega'_k)$. For the set of stopping times bounded by~$N\in\N$ we will write:
\[
  \mathcal{S}_N = \{ \tau\colon\Omega\to\N |\ \tau \text{ is a stopping time with }
    \tau \leq N \}
\]


There is a natural bijection between  trees $\TT$ and bounded stopping times, see below.

\begin{definition}\label{tt}
Given $T \in \TT$, define the stopping time $\tau_T$ recursively:
\begin{enumerate}
\item[\textup{(i)}] If $T = \bullet$, then $\tau_T = 0$.
\item[\textup{(ii)}] If $T = [T_-, T_+]$, then
\[
  \tau_T (\omega_1,\omega_2,\dots)= 1 +
  \begin{cases}
    \tau_{T_-}(\omega_2,\omega_3,\dots)& \text{if } \omega_1 = -1,\\[2pt]
    \tau_{T_+}(\omega_2,\omega_3,\dots) & \text{if } \omega_1 = +1,
  \end{cases}
\]
\end{enumerate}
\end{definition}

 For any $n\in\N$, the map  $\TT_n\ni T \mapsto \tau_T\in\mathcal{S}_n$  is a bijection between $\TT_n$ and $\mathcal{S}_n$. Furthermore, by simple induction with respect to the height of a tree it is easy to show that $S\sqsubseteq T$ implies  $\tau_S\leq\tau_T$ (the pathwise inequality: $\tau_S(\omega)\leq\tau_T(\omega)$, $\omega\in\Omega$).
Given a stopping time $\tau\in \mathcal{S}_N$, the random variable $S_{\tau}$ is given by:
$$S_{\tau}=\sum\limits_{k=0}^{N}S_k\cdot 1_{\{\tau=k\}},$$
where $1_A\colon\Omega\to\{0,1\}$ stands for the characteristic function of  the set $A\in \FF$.
Finally, we note that $\tau=\tau\circ  (\xi_1,\xi_2,\dots)$, and we will often use notation:
$$\tau(\xi_1,\xi_2,\dots):=\tau\circ (\xi_1,\xi_2,\dots).$$

\begin{example}\label{Tree3}
Let $T = [T_-, T_+] = [\bullet, [\bullet, \bullet]]$. We have
$\tau_{T_-} = 0$, $\tau_{T_+} = 1$, and
$$\tau_T \ \ = 1 +
\begin{cases}
0 & \text{if } \xi_1 = -1,\\[2pt]
1 & \text{if } \xi_1 = +1
\end{cases}. $$
\end{example}
In other words: $\tau_T=1_{\{-1\}}(\xi_1)+2\cdot 1_{\{1\}}(\xi_1)$, see the illustration: 
\begin{center}
\begin{tikzpicture}[
level distance=8mm,
level 1/.style={sibling distance=35mm},
level 2/.style={sibling distance=25mm},
dot/.style={circle, fill=black, inner sep=1.5pt},
leaf/.style={font=\normalsize},
edge from parent/.style={draw, thin},
]
\node[dot, label=above:{$0$}] {}
child {node[leaf] {$-1$}}
child {node[dot, label=right:{$+1$}] {}
child {node[leaf] {$0$}}
child {node[leaf] {$+2$}}
};
\end{tikzpicture}
\end{center}

\begin{definition}[Embedding map]\label{def:embedding}
The \emph{embedding map} $\mathcal{E} \colon \TT \to \Delta(\Z)$ assigns to each tree
 the law of the corresponding stopped random walk:
\[
  \mathcal{E}(T) = \mathcal{L}(S_{\tau_T}).
\]
\end{definition}
\subsection{Growing trees} We will now construct the family of trees $\{T_t\colon t\in 2\N+1\}\subset \TT$ such that $s\preceq t$ implies 
$T_s\sqsubseteq T_t$, and the family  $\{P_t\colon t\in 2\N+1\}$ given by \eqref{Pt} satisfies:
$$P_t=\E(T_t).$$

We have:
$$P_1=\dd_0\mbox{ and }P_3=\Phi(\dd_0, \dd_0)=\frac{1}{2}\dd_{-1} + \frac{1}{2}\dd_{1},$$
and
$$\mathcal{E}(\bullet)=\LL(S_0)= \delta_0=P_1\mbox{ and }\mathcal{E}([\bullet,\bullet])=\LL(S_1)=P_3.$$
Thus, define $T_1 = \bullet$ and $T_3 =[T_3^-,T_3^+]=[\bullet,\bullet]$, so  $\tau_1:=\tau_{T_1}=0$ and $\tau_3:=\tau_{T_3}=1$ satisfy:
$$P_1=\mathcal{L}(S_{\tau_1})\mbox{ and }P_3=\mathcal{L}(S_{\tau_3}).$$

 Further construction proceeds by growing trees according to the following recursion: at each step, one subtree is left intact while the other subtree grows to become a copy of the current tree, and the 
successive letters from $\{L,R\}$ determine which subtree  is left intact and which one grows at each step. So far we have 
$$T_3=[T_3^-,T_3^+],\mbox{ where }T_3^+=T_3^-=T_1=\bullet.$$


\textbf{Recursive step.} 
Given $v\in T\setminus\{1\}$ and
$$T_v = [T_v^-, T_v^+] \in \TT,$$ define:
\begin{equation}\label{td}
T_{vL} = [T_v,\, T_v^+],\mbox{ and }T_{vR} = [T_v^-,\, T_v].
\end{equation}

By the construction, if $s\preceq t$ then $T_s\sqsubseteq T_t$. To verify that the recursive step leads to $\mathcal{E}(T_v) = P_{v}$, $v\in\{L,R\}^\star$, it is enough to show that $\E(T_{w(n)}) = P_{w(n)}$, $n\in\N$, for any sequence $w\in\{L,R\}^{\N}$. In other words, it is enough to check that \eqref{td}  matches the recursion from Proposition \ref{TT}.
\begin{theorem}\label{RE}
For any $v\in\{L,R\}^{\star}$, $\mathcal{E}(T_v) = P_{v}$. In other words: $\mathcal{L}(S_{\tau_v}) = P_{v}$, where \ $\tau_v = \tau_{T_v}$.
\end{theorem}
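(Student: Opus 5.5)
The plan is to prove a slightly stronger statement by induction on the length $\ell(v)$. For every $v\in\{L,R\}^\star$ with $T_v=[T_v^-,T_v^+]$, we show
$$\E(T_v^-)=P^L_v\quad\mbox{and}\quad \E(T_v^+)=P^R_v,$$
where $(P^L_v,P^R_v)$ denotes the pair obtained from the recursion of Proposition \ref{TT} started at $(\dd_0,\dd_0)$. Equivalently, by \eqref{system} and \eqref{dee}, $P^L_{2s+1}=P_{s+1}$ and $P^R_{2s+1}=P_s$. Together with this we show $\E(T_v)=P_v=\Phi(P^L_v,P^R_v)$.

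The first ingredient is a one-step lemma: for any trees $A,B\in\TT$,
$$\E([A,B])=\Phi(\E(A),\E(B))=\tfrac12\sigma_L\E(A)+\tfrac12\sigma_R\E(B).$$
To prove it, condition on $\xi_1$ in Definition \ref{tt}.
\begin{itemize}
\item On $\{\xi_1=-1\}$ we have $S_{\tau_{[A,B]}}=-1+S'_{\tau_A(\xi_2,\xi_3,\dots)}$, where $S'_n=\sum_{i=2}^{n+1}\xi_i$ is a simple random walk independent of $\xi_1$ with the same law as $S$.
\item Its conditional law is therefore $\sigma_{-1}\E(A)=\sigma_L\E(A)$.
\item Symmetrically, on $\{\xi_1=+1\}$ the conditional law is $\sigma_R\E(B)$.
\end{itemize}
Each event has probability $\frac12$, which gives the formula. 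The only care needed is to justify that $\tau_A(\xi_2,\dots)$ is a stopping time for the shifted walk, and that the shifted process is independent of $\xi_1$ and identically distributed with $(\xi_1,\xi_2,\dots)$. Both follow directly from the product structure of $\PP$.

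For the base case $v=\eps$ we have $T_3=[\bullet,\bullet]$, $\E(\bullet)=\dd_0=P^L_3=P^R_3$, and the lemma gives $\E(T_3)=\Phi(\dd_0,\dd_0)=P_3$.

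For the inductive step, suppose the claim holds for $v$.
\begin{itemize}
\item By \eqref{td}, $T_{vL}=[T_v,T_v^+]$. Its subtrees have laws $\E(T_v)=P_v$ and $\E(T_v^+)=P^R_v$, which is exactly $\Phi_L(P^L_v,P^R_v)=(P_v,P^R_v)$. The lemma then yields $\E(T_{vL})=\Phi(P_v,P^R_v)=P_{vL}$.
\item The case $T_{vR}=[T_v^-,T_v]$ is identical, using $\Phi_R$.
\end{itemize}
Consistency with $P_{vL}$ as defined by \eqref{Pt} comes from Proposition \ref{TT}, transported from $\mu_1$ to $\dd_0$ via Observation \ref{O2}: the same maps $\Phi_{w_n}$ applied to $(\dd_0,\dd_0)$ produce $P_{w(n)}$.

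I expect the main obstacle to be bookkeeping rather than any real difficulty. Specifically, one has to check that the pair recursion for $P$ really matches \eqref{dee}, i.e. that $P^L_{vL}=P_v$ and $P^R_{vL}=P^R_v$ (and the analogous identities for $R$). These are exactly the identities verified in the proof of Proposition \ref{TT} using \eqref{system}, namely $vL=4s+1$ giving $P_{2s+1}$ and $P_s$.
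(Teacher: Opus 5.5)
Your proposal is correct and follows essentially the same route as the paper. Both argue by induction on $\ell(v)$ with the hypothesis strengthened to $\E(T_v^-)=P^L_v$ and $\E(T_v^+)=P^R_v$, combined with the one-step identity $\E([T_-,T_+])=\Phi(\E(T_-),\E(T_+))$, obtained by conditioning on $\xi_1$ and using that $(\xi_2,\xi_3,\dots)$ is independent of $\xi_1$ with the same law. Your bookkeeping is slightly cleaner than the paper's: if $(P^L_v,P^R_v)$ is defined as the pair iterate from $(\dd_0,\dd_0)$, then $P^L_{vL}=P_v$ and $P^R_{vL}=P^R_v$ hold by the definition \eqref{Pt}, so the identification with $P_{s+1}$ and $P_s$ is not actually needed for this theorem.
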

\begin{proof}
For $v=\varepsilon$ and for $P_3^L:=P_1=\E(T_3^-)$,\  $P_3^R:=P_1=\E(T_3^+)$, we have
$$P_3=\mathcal{E}([T^-_3,T^+_3])\mbox{ and } P_3=\Phi(\E(T_3^-),\E(T_3^+))=\Phi(P_3^L,P_3^R).$$
 Now, assume that for some $3\preceq v$ we have:
$$P_v=\mathcal{E}([T^-_v,T^+_v])\mbox{ and } P_v=\Phi(\E(T_3^-),\E(T_3^+))=\Phi(P_v^L,P_v^R),$$
where $P_v^L=\E(T_v^-)$ and $P_v^R=\E(T_v^+)$ (considering the extension \eqref{system}, explicitly: $P_{2s+1}^L=P_{s+1}$ and  $P_{2s+1}^R=P_{s}$). Note that  $\sigma_L(\LL(X))=\LL(X-1)$ and $\sigma_R(\LL(X))=\LL(X+1)$ for any $r.v.$ $X\colon\Omega\to\Z$, where $\sigma_L$, $\sigma _R$ are given by \eqref{SLR}.  By  Definition \ref{tt}, for any $T_-,T_+\in\TT$,
\begin{equation}\label{e} \mathcal{E}([T_-, T_+])=\mathcal{L}(S_{\tau_{[T_-, T_+]}})=  \tfrac{1}{2}\,\mathcal{L}(-1 + S_{\tau_{T_-}}) 
     +\tfrac{1}{2}\,\mathcal{L}(1 + S_{\tau_{T_+}})=\end{equation}
$$ =\tfrac{1}{2} \sigma_L\bigl(\mathcal{E}(T_-)\bigr)
    +   \tfrac{1}{2} \sigma_R\bigl(\mathcal{E}(T_+)\bigr)=\Phi(\E(T_-),\E(T_+)).$$
Above, we have used the fact that for $ j\in \{-,+\}$ the law $\LL(S_{\tau_{T_j}})$ is the same as the law of $S_{\tau_{T_j}}'=S_{\tau_{T_j}}\circ (\xi_2,\xi_3,\dots)$ which is defined by the 
same formula applied to the shifted sequence. As $P_{vL}=\Phi(P_v,P_v^R)$ and $P_{vR}=\Phi(P_v^L,P_v)$,  equation \eqref{e} guarantees that 
$$P_{vL}=\E(T_{vL})\mbox{ and }P_{vR}=\E(T_{vR}).$$
In other words, for any $w\in\{L,R\}^\N$ the recursive step \eqref{td} matches the dynamics of the sequence 
$P_{w(n)}$, ${n\in\N}$, that is: $P_{w(n)W}=\Phi\circ\Phi_W(P_{w(n)}^L,P_{w(n)}^R)$, where $W\in\{L,R\}.$
\end{proof}



The explicit recursive description of the stopping times $\tau_{v}(\xi_1,\xi_2,\dots)$ goes by: if \[
  \tau_v(\xi_1,\xi_2,\dots) = 1 +
  \begin{cases}
    \tau^L_{v}(\xi_2,\xi_3,\dots) & \text{if } \xi_1 = -1,\\[4pt]
    \tau^R_{v}(\xi_2,\xi_3,\dots) & \text{if } \xi_1 = +1,
  \end{cases}
\]
then: 
\begin{equation}\label{stop}  \tau_{vL}(\xi_1,\xi_2,\dots)=1 +
    \begin{cases}
     \tau_v(\xi_2,\dots)&\text{if } \xi_1= -1
       \\
       \tau_{v}^R(\xi_2,\dots)&\text{if } \xi_1 = +1
     
  \end{cases},
\end{equation}
and the  formula for $\tau_{vR}$ is analogous. The construction implies that: $T_w \sqsubseteq T_{wL}$ and  $T_w \sqsubseteq T_{wR}$, and  hence: $\tau_{w}\leq\tau_{wL}\mbox{ and }\tau_{w}\leq\tau_{wR}$.
 Moreover, the height of the tree $|T_w|=\max(\tau_w)$ increases by at most one at each step and hence $\tau_w\leq \ell(w)+1$, where $\ell(w)$ denotes the length of the word $w$.  The dynamics that governs the sequence $\{T_t\}_{t\in T}$ determines  a hierarchical structure exhibiting self-similarity. Let us take a look on a few pictures which illustrate  the growth of $T_{LRLL}$, a tree which determines measure $P_{41}=P_{LRLL}$.

\begin{example}\label{ex:trees_3_5}
The measure $P_3 = \frac{1}{2}\delta_1 + \frac{1}{2}\delta_{-1}$
corresponds to the tree $T_3 = [\bullet,\bullet]$ on the left.  Measure $P_5=P_{L}=\mathcal{E}(T_{L}) = \frac{1}{2}\delta_1 + \frac{1}{4}\delta_0
+ \frac{1}{4}\delta_{-2}$ is represented by the tree $T_{L}=[T_3,T_3^+]=[[\bullet,\bullet],\bullet]$ on the right. The left branch of $T_3$ has grown to copy the $T_3$ while the right branch of $T_3$ has stayed intact. 

\begin{center}
\begin{tikzpicture}[
  level distance=6mm,
  level 1/.style={sibling distance=25mm},
  level 2/.style={sibling distance=20mm},
  dot/.style={circle, fill=black, inner sep=1.5pt},
  leaf/.style={font=\normalsize},
  edge from parent/.style={draw, thin},
]
\node[dot] {}
  child {node[leaf] {$-1$}}
  child {node[leaf] {$+1$}};
\end{tikzpicture}
\hspace{25mm}
\begin{tikzpicture}[
  level distance=6mm,
  level 1/.style={sibling distance=20mm},
  level 2/.style={sibling distance=15mm},
  dot/.style={circle, fill=black, inner sep=1.5pt},
  leaf/.style={font=\normalsize},
  edge from parent/.style={draw, thin},
]
\node[dot] {}
  child {node[dot] {}
    child {node[leaf] {$-2$}}
    child {node[leaf] {$0$}}
  }
  child {node[leaf] {$+1$}};
\end{tikzpicture}
\end{center}
\end{example}

\begin{example}%
\label{PLR}
$P_{11}=P_{LR}=\mathcal{E}(T_{LR}) = \frac{1}{4}\delta_2 + \frac{1}{8}\delta_1
+ \frac{1}{4}\delta_0 + \frac{1}{8}\delta_{-1}
+ \frac{1}{4}\delta_{-2}$, where $T_{LR} = [T_L^-,\, T_L]$.
The left subtree $T_L^- = [\bullet, \bullet]$ is left intact; the right subtree of $T_{LR}$ is a copy of the entire tree $T_L$. Note that the resulting measure $P_{11}$ is symmetric around zero.
\begin{center}
\begin{tikzpicture}[
  level distance=5mm,
  level 1/.style={sibling distance=25mm},
  level 2/.style={sibling distance=20mm},
  level 3/.style={sibling distance=15mm},
  dot/.style={circle, fill=black, inner sep=1.5pt},
  leaf/.style={font=\normalsize},
  edge from parent/.style={draw, thin},
]
\node[dot] {}
  child {node[dot] {}
    child {node[leaf] {$-2$}}
    child {node[leaf] {$0$}}
  }
  child {node[dot] {}
    child {node[dot] {}
      child {node[leaf] {$-1$}}
      child {node[leaf] {$+1$}}
    }
    child {node[leaf] {$+2$}}
  };
\end{tikzpicture}
\end{center}
\end{example}

\begin{example}%
\label{PLRL}

 $P_{21}=P_{LRL}=\mathcal{E}(T_{LRL})= \tfrac{1}{4}\delta_2 + \tfrac{1}{4}\delta_1
  + \tfrac{1}{16}\delta_0 + \tfrac{1}{4}\delta_{-1}
  + \tfrac{1}{16}\delta_{-2} + \tfrac{1}{8}\delta_{-3}$, where
$T_{LRL} = [T_{LR},\, T_{LR}^+] = [T_{LR},\, T_L]$.

\begin{center}
\begin{tikzpicture}[
  level distance=5mm,
  level 1/.style={sibling distance=45mm},
  level 2/.style={sibling distance=30mm},
  level 3/.style={sibling distance=15mm},
  level 4/.style={sibling distance=14mm},
  dot/.style={circle, fill=black, inner sep=1.5pt},
  leaf/.style={font=\normalsize},
  edge from parent/.style={draw, thin},
]
\node[dot] {}
  child {node[dot] {}
    child {node[dot] {}
      child {node[leaf] {$-3$}}
      child {node[leaf] {$-1$}}
    }
    child {node[dot] {}
      child {node[dot] {}
        child {node[leaf] {$-2$}}
        child {node[leaf] {$0$}}
      }
      child {node[leaf] {$+1$}}
    }
  }
  child {node[dot] {}
    child {node[dot] {}
      child {node[leaf] {$-1$}}
      child {node[leaf] {$+1$}}
    }
    child {node[leaf] {$+2$}}
  };
\end{tikzpicture}
\end{center}
\end{example}
\begin{example} Below we represent $P_{41}=\E(T_{LRLL})$, where $T_{LRLL}=[T_{LRL},T_L]$.
\begin{center}
\begin{tikzpicture}[
  level distance=7mm,
  level 1/.style={sibling distance=60mm},
  level 2/.style={sibling distance=35mm},
  level 3/.style={sibling distance=18mm},
  level 4/.style={sibling distance=11mm},
  level 5/.style={sibling distance=9mm},
  dot/.style={circle, fill=black, inner sep=1.5pt},
  leaf/.style={font=\small},
  edge from parent/.style={draw, thin},
]
\node[dot] {}
  child {node[dot] {}
    child {node[dot] {}
      child {node[dot] {}
        child {node[leaf] {$-4$}}
        child {node[leaf] {$-2$}}
      }
      child {node[dot] {}
        child {node[dot] {}
          child {node[leaf] {$-3$}}
          child {node[leaf] {$-1$}}
        }
        child {node[leaf] {$0$}}
      }
    }
    child {node[dot] {}
      child {node[dot] {}
        child {node[leaf] {$-2$}}
        child {node[leaf] {$0$}}
      }
      child {node[leaf] {$+1$}}
    }
  }
  child {node[dot] {}
    child {node[dot] {}
      child {node[leaf] {$-1$}}
      child {node[leaf] {$+1$}}
    }
    child {node[leaf] {$+2$}}
  };
\end{tikzpicture}
\end{center}

\end{example}

\section{The martingale evolution of $P_t$}\label{martingale} 
We proceed under the notation and the assumptions of the previous sections. We have established: $s\preceq t \Rightarrow \tau_s\leq\tau_t$, where $P_{t}=\LL(S_{\tau_t})$. For  $w\in\{L,R\}^\N$, define:
$$\eta^w_t:=\tau_{w(t)}, \ \ \  t\in\Z_{\geq -1},$$
so we have: $\eta^w_{-1}=0$,  $\eta^w_0=1$. Define:
\begin{equation}\label{XM}
X^w_{t}=S_{\eta^w_t},\ t\in\Z_{\geq -1}.
\end{equation}
When $w\in\{L,R\}^\N$ is fixed, we will often write simply: $\eta_t=\eta^w_t$ and $X_t=X_t^w$. 

\subsection{Martingale property and Wald identities} 
Fix $w\in\{L,R\}^\N$.
As we have shown in the previous section, for $0\leq s<t$ we have 
$$\eta_s=\tau_{w(s)}\leq\tau_{w(t)}=\eta_t\leq \ell(w_1\dots w_{t})+1=t+1,$$
  and thus, by Doob's optional stopping theorem, $X_t=S_{\eta_t}$ forms a martingale (more precisely, a martingale with respect to the stopped filtration, which forces that $X_t$ is a martingale with respect to its natural filtration $F_t=\sigma(X_t,\dots,X_1)$).  Additionally, by the second Wald's identity, the variance of the measure $P_t$ equals to the expected number of steps of  the stopping time $\tau_{t}$ which defines the measure $P_t$ by $P_t=\LL(S_{\tau_t})$. 
\begin{theorem}
For any $w\in\{L,R\}^\N$, the  process  $X_t=X_t^w$ given by \eqref{XM} is a martingale. In particular, $E[X_t|X_s]=X_{s},\mbox{ for }t\geq s.$
By Wald's identities, 
$$E[X_t]=0\mbox{ and }D^2[X_t]=E[\eta_t].$$ 
\end{theorem}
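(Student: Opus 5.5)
The argument rests on three ingredients established earlier: the monotonicity $\eta_s\le\eta_t$ for $s\le t$ (from $T_{w(s)}\sqsubseteq T_{w(t)}$), the uniform bound $\eta_t\le t+1$, and the fact that each $\eta_t=\tau_{w(t)}$ is a stopping time for the natural filtration $(\FF_n)$ of the simple random walk $S_n$. The plan is to apply Doob's optional stopping theorem to $S_n$ with the bounded stopping times $\eta_s\le\eta_t$. This gives
$$E[S_{\eta_t}\,|\,\FF_{\eta_s}]=S_{\eta_s},$$
so $(X_t)$ is a martingale with respect to the stopped filtration $\mathcal{G}_t:=\FF_{\eta_t}$. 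The filtration is increasing because $\eta_s\le\eta_t$ pointwise, and $X_t=S_{\eta_t}$ is $\mathcal{G}_t$-measurable. Integrability is immediate since $|X_t|\le t+1$.

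Next we pass to the natural filtration $F_t=\sigma(X_{-1},\dots,X_t)$. Since $F_s\subseteq\mathcal{G}_s$, the tower property gives
$$E[X_t\,|\,F_s]=E\bigl[E[X_t\,|\,\mathcal{G}_s]\,\big|\,F_s\bigr]=E[X_s\,|\,F_s]=X_s.$$
Conditioning once more on $\sigma(X_s)\subseteq F_s$ yields $E[X_t\,|\,X_s]=X_s$.

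For the Wald identities, the first one, $E[X_t]=0$, follows either from optional stopping with $s=-1$ (where $\eta_{-1}=0$ and $X_{-1}=0$) or directly from Wald's first identity with $E[\xi_i]=0$. For the second, we use that $M_n=S_n^2-n$ is a martingale, since $E[\xi_{n+1}^2\,|\,\FF_n]=1$. Optional stopping at the bounded time $\eta_t$ then gives $E[S_{\eta_t}^2]=E[\eta_t]$. Because the mean is zero, this is exactly $D^2[X_t]=E[\eta_t]$.

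No step is a genuine obstacle: every stopping time involved is bounded by $t+1$, so optional stopping applies without integrability conditions. The only point requiring care is the measurability claim that each $\tau_T$ from Definition \ref{tt} is indeed an $(\FF_n)$-stopping time. This follows by induction on the height of $T$: the event $\{\tau_{[T_-,T_+]}\le n\}$ is determined by $\xi_1$ together with the event $\{\tau_{T_\pm}\le n-1\}$ evaluated on the shifted sequence $(\xi_2,\xi_3,\dots)$. Together with the pathwise monotonicity already noted after Definition \ref{tt}, this completes the argument.
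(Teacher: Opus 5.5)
Your proposal is correct and follows essentially the same route as the paper. Both apply optional stopping for $S_n$ and for $S_n^2-n$ at the nested bounded times $\eta_s\le\eta_t\le t+1$, which gives the martingale property with respect to the stopped filtration (hence the natural one) and $D^2[X_t]=E[\eta_t]$; you simply make explicit the tower-property passage to the natural filtration and the inductive check that each $\tau_T$ is an $(\FF_n)$-stopping time, both of which the paper leaves implicit.
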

\begin{proof}
  
We will prove briefly  the second statement. Both processes $\{S_n\}_{n\in\mathbb{N}}$ and $\{S_n^2-n\}_{n\in\N}$ are martingales with respect to $F_n=\sigma(\xi_1,\dots,\xi_n)$, and the stopping times $\eta_t$ are bounded. Hence, by Doob's theorems: $E[S_{\eta_t}]=E[\xi_1]\cdot E[\eta_t]=0$, and $E[(S_{\eta_t})^2-\eta_t]=0$, which translates into $D^2[S_{\eta_t}]=E[\eta_t]$ 
\end{proof}

\subsection{Support} Given $P\in\Delta(\Z)$,  let $\supp(P)=\{d\in\Z\colon P(d)>0\}$.
 Given $w=w_1\dots w_n\in\{L,R\}^\star$, let 
$$\ell_L(w)=\sum\limits_{i=1}^n 1_{\{L\}}(w_i)\mbox{ and } \ell_R(w)=\sum\limits_{i=1}^n 1_{\{R\}}(w_i).$$
 We have $\supp(P_{\eps})=\supp(P_3)=\{-1,1\}$. Fix $w\in \{L,R\}^\star\setminus\{\eps\}$. We have $\max(\tau_w)=\ell(w)+1$. Denote $\tau_w\wedge n:=\min(\tau_w,n)$. The stopped process $\{S_{\tau_w\wedge n}\}_{n=0}^{\ell(w)+1}$ takes at most $|w|_R+1$ steps to the right  and $|w|_L+1$ steps to the left (this follows from the recursive definition, \eqref{td}, \eqref{stop}).  Furthermore: $ \{S_{\tau_w}=|w|_R+1\}=\{\xi_1=1,\dots,\xi_{|w|_R+1}=1\}$ and $\{S_{\tau_w}=-(|w|_L+1)\}=\{\xi_1=-1,\dots,\xi_{|w|_L+1}=-1\}.$ Hence, $$\min(\supp(P_{w}))=-(|w|_L+1)\mbox{ and }\max(\supp(P_{w}))=|w|_R+1,$$
and
\begin{equation}\label{PWLR}
P_{w}(-(|w|_L+1))=\left(\frac{1}{2}\right) ^{|w|_L+1}\mbox{ and }P_{w}(|w|_R+1)=\left(\frac{1}{2}\right)^{|w|_R+1}.
\end{equation}

\subsection{Symmetries.}\label{symsym}

 Given $w\in\{L,R\}^\star$, we will write $\overline{w}$ for the word obtained from $w$ by interchanging $L$ and $R$ (for example, $\overline{LLR}=RRL$).  By simple induction with respect to the length of the word $w$,  it is easy to see that $ S_{\tau_{\omega}}\stackrel{d}{=}-S_{\tau_{\overline{\omega}}}$, and hence: $$P_w(d)=P_{\overline{w}}(-d),\ d\in\Z.$$ 

Now, given $w=w_1\dots w_n$, let $\overleftarrow{w}$ denote the reversed word: $\overleftarrow{w}=w_n\dots w_1$ ( $\overleftarrow{\varepsilon}:=\varepsilon$). The following symmetry may be a bit surprising:
\begin{equation}\label{sym2}
P_w(d)=P_{\overleftarrow{w}}(d),\ w\in\{L,R\}^\star,\ d\in\Z .
\end{equation}
The above follows from the available results: paper \cite{MS} has shown that 
$\mu_{(t)_2}=\mu_{\overleftarrow{(t)}_2}$, where $\overleftarrow{(t)}_2$ reverses the digits in the binary representation $(t)_2$. Our representation $t=w_1\dots w_n$ inherits this reverse property by Observation \ref{binarr} (also, by the deconvolution formula). In particular, we see that the mapping $T\ni \to P_t\in\Delta_0(\Z)$ is not injective (the map $T\ni t\to T_t\in\TT$ is injective). The following theorem summarizes the symmetries.
\begin{theorem}\label{tsym}
For any $w\in\{R,L\}^*$, 
$$P_w(d)=P_{\overline{w}}(-d)\mbox{ and }P_w(d)=P_{\overleftarrow{w}}(d),\ d\in\Z.$$
\end{theorem}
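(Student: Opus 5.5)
The reflection symmetry $P_w(d)=P_{\overline{w}}(-d)$ will be proved by induction on $\ell(w)$, and it is cleanest to prove a slightly stronger statement for the whole triple $(P_w,P_w^L,P_w^R)$. Write $\rho$ for the reflection $\rho(P)(d)=P(-d)$. Then $\rho\sigma_L=\sigma_R\rho$, and therefore
$$\rho\,\Phi(\mu,\nu)=\Phi(\rho\nu,\rho\mu).$$
The inductive claim is
$$\rho P_w=P_{\overline{w}},\qquad \rho P_w^L=P_{\overline{w}}^R,\qquad \rho P_w^R=P_{\overline{w}}^L.$$
For $w=\eps$ all three measures are $\dd_0$ or $P_3$, and these are $\rho$-invariant, so the claim holds.

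For the inductive step, Proposition \ref{TT} (through Theorem \ref{RE}) gives $(P_{wL}^L,P_{wL}^R)=(P_w,P_w^R)$. Applying $\rho$ and the inductive hypothesis yields $(\rho P_w,\rho P_w^R)=(P_{\overline w},P_{\overline w}^L)$, and this equals $(P_{\overline{w}R}^R,P_{\overline{w}R}^L)$ because $\Phi_R$ replaces the right coordinate by the new measure. Then
$$\rho P_{wL}=\rho\Phi(P_w,P_w^R)=\Phi(P_{\overline w}^L,P_{\overline w})=P_{\overline wR}.$$
The case $wR$ is symmetric. Equivalently, one can argue pathwise: the map $\omega\mapsto-\omega$ preserves $\PP$, and $\tau_{T_{\overline w}}(-\omega)=\tau_{T_w}(\omega)$ because $T_{\overline w}$ is the mirror image of $T_w$, which follows from \eqref{td} by induction. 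Hence $S_{\tau_{\overline w}}(-\omega)=-S_{\tau_w}(\omega)$.

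For the reversal symmetry I would not argue from the dynamics but transfer the known identity of \cite{MS}: $\mu_t=\mu_{t'}$ whenever $(t')_2$ is the digit reversal of $(t)_2$. By Observation \ref{binarr}, $w$ corresponds to $t=(1\,\beta(w_1)\cdots\beta(w_n)\,1)_2$. The reversal of this string is $(1\,\beta(w_n)\cdots\beta(w_1)\,1)_2$, which corresponds exactly to $\overleftarrow{w}$, because the leading and trailing $1$'s are preserved and $t$ is odd, so no leading zeros are created. Thus $\mu_w=\mu_{\overleftarrow w}$. The deconvolution formula \eqref{dc}, $P(d)=2\mu(d+1)-\mu(d+2)$, then gives $P_w=P_{\overleftarrow w}$ directly, since $P$ is determined by $\mu$.

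The main obstacle is the reversal symmetry itself if one wants a self-contained proof, since the tree recursion \eqref{td} is not visibly symmetric under word reversal. Relying on \cite{MS} together with Observation \ref{binarr} and \eqref{dc} avoids this, and the only care needed is the bookkeeping of the binary endpoints. For an intrinsic proof I would first try a transfer-matrix formulation: each letter acts as a $2\times2$ matrix of operators on the pair $(P^L,P^R)$ in generating-function form, and one would seek a transpose-type symmetry in which the initial and final vectors play interchangeable roles. I expect that to be the hard part, and I would leave it aside.
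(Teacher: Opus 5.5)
Your proposal is correct and follows essentially the same route as the paper. The reflection identity is proved by induction on $\ell(w)$: your strengthened hypothesis on the triple $(P_w,P_w^L,P_w^R)$, or equivalently your mirror-tree/pathwise version, spells out the ``simple induction'' the paper invokes for $S_{\tau_w}\stackrel{d}{=}-S_{\tau_{\overline{w}}}$. The reversal identity is obtained by transferring the known digit-reversal result \cite{MS} through Observation \ref{binarr} and the deconvolution formula \eqref{dc}, with your check that odd $t$ keeps the endpoints intact.
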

\begin{conclusion}The alternating word $v^{2n} = (LR)^n$ of length $2n$ ($v^{2n}=LRLR...LR$) satisfies
$$\overline{v^{2n}} =  \overleftarrow{v^{2n}}=(RL)^n.$$  
 By Theorem \ref{tsym}, the above  implies:
$$P_{v^{2n}}(d)=P_{v^{2n}}(-d),\  d\in\Z.$$
\end{conclusion}
In particular,  showing $P_{v^{2n}}(\N)\to \frac12$ is equivalent to $P_{v^{2n}}(\{0\})\to 0$. The word  $v^{2n}$ plays crucial role in the variance control.

\subsection{Variance analysis}
Given $P\in\Delta(\Z)$, we will write 
$$E[P]=\int\limits_{\Z}xP(dx)\mbox{ and }D^2[P]=\int_{\Z}(x-E[P])^2P(dx).$$
Controlling the variance  of $\mu_t$ is a nontrivial problem, important for the analysis of the asymptotics of $\mu_t$, see for instance \cite{drmota},\cite{Emme},\cite{Hosten},\cite{sw2023}. We  now show that considering the poset $(T, \preceq)$ allows for very natural variance control. One of the reasons is that $(T, \preceq)$ arranges the measures $\mu_t$ in such a way that the variance is monotone. First, note that by $\mu_t=\mu_1\ast P_t$, we have $D^2[\mu_t]=2+D^2[P_t]$, and we thus focus on $P_t$.  We have:
\begin{equation}\label{monovar}
s\preceq t \Longrightarrow D^2(P_s)\leq D^2(P_{t}).
\end{equation}
Within our framework, the above is immediate:
$$s\preceq t \Longrightarrow D^2(P_s)=E[\tau_s]\leq E[\tau_{t}]=D^2(P_{t}).$$
The monotonicity \eqref{monovar} may be also concluded from the fact that for any $\mu,\nu\in \Delta_0(\Z)$
\begin{equation}\label{variance}
D^2[\Phi(\mu,\nu)]=\frac12 D^2[\mu]+\frac12 D^2[\nu]+1
\end{equation}
Define recursively: $L^{n+1}:=L^nL$ and $R^{n+1}:=R^nR$. Note that constant words $L^n$ and $R^n$ ( $L^n= 2^{n+1}+1$, $R^n=2^{n+2}-1$)  have the smallest variance among all $n$-letter words:
$$\min\limits_{w\in\{L,R\}^n}D^2(P_{w})=D^2(P_{L^n})=D^2(P_{R^n}).$$
By \eqref{variance}, $D^2[P_{L^{n+1}}]=\frac{1}{2}\cdot D^2[P_{L^n}]+1$, where $D^2[P_{L^0}]:=D^2[P_{\eps}]=D^2[\frac12\dd{-1}+\frac12\dd_1]=1$. This yields:
\begin{equation}\label{minD}
\min\limits_{w\in \{L,R\}^k} D^2[P_w]=D^2[P_{L^k}]=D^2[P_{R^k}]=2-(\frac{1}{2})^{k}.
\end{equation}
On the other hand, quick analysis of \eqref{variance} shows that the alternating word $v^n$ ( $v^n_{k-1}\neq v^n_{k}$ for $1<k\leq n$) satisfies:
$$\max\limits_{w\in \{L,R\}^n} D^2[P_w] =D^2[v^n].$$
By induction, one may show that:
$$
 \; D^2[P_{v^n}] \;=\; \frac{2n}{3} \;+\; \frac{8}{9} \;+\; \frac{(-1)^{n}}{9\cdot 2^{n}},\ n\geq0, \ ( v^0:=\eps). $$
Now, we will take a more common approach: controlling the variance by the number of blocks $\ell_b(w)$ in the word $w\in\{L,R\}^\star\setminus\{\eps\}$. More precisely, 
$$\ell_b(w_1^{n_1}w_2^{n_2}...w_k^{n_k})=k,\mbox{ where } w_i\neq w_{i+1}\mbox{ and } n_i>0.$$  
 By simple induction we obtain the monotonicity: 
\begin{equation}\label{D2}
D^2[w_1^{n_1}w_2^{n_2}...w_k^{n_k}]\leq D^2[w_1^{n_1+m_1}w_2^{n_2+m_2}...w_k^{n_k+m_k}],\mbox{ where }m_i\geq0,\ i=1,\dots,k .
\end{equation}
Again, the alternating word $v^n$ plays the fundamental role: $l_b(v^n)=n$ and, by \eqref{D2},
$$D^2[P_{v^k}]=\min\limits_{\{w\colon l_b(w)=k\}}D^2(P_w).$$

Recall that $\lim\limits_{n\to\infty}D^2[P_{L^n}]=\lim\limits_{n\to\infty}D^2[P_{R^n}]=2$. Now, given the alternating word $v\in\{L,R\}^k$, from \eqref{D2}, by induction with respect to $k$,
\begin{equation}\label{limlim}
\sup\limits_{\{w\in \{L,R\}^\star \colon \ell_b(w)=k\}}D^2[P_w]=\lim\limits_{n\to\infty} D^2[P_{v_1^nv_2^n\dots v_k^n }]=\underbrace{2 + 2 + \cdots + 2}_{k}=k\cdot 2.
\end{equation}
We summarize this subsection with the following theorem.
\begin{theorem}\label{WAR}For any $w\in\{L,R\}^\star$,
$$2-(\frac{1}{2})^{\ell(w)}\leq D^2[P_w]\leq \frac{2\ell(w)}{3} \;+\; \frac{8}{9} \;+\; \frac{(-1)^{\ell(w)}}{9\cdot 2^{\ell(w)}},$$and for any $w\in\{L,R\}^\star\setminus\{\eps\}$,
$$ \frac{2\ell_b(w)}{3} \;+\; \frac{8}{9} \;+\; \frac{(-1)^{\ell_b(w)}}{9\cdot 2^{\ell_b(w)}}\leq D^2[P_w]< 2\cdot \ell_b(w),$$
None of the above inequalities can be improved- the weak inequalities are attained by the constant word/alternating word, and the strong inequality is given by the limit $\eqref{limlim}$. 
\end{theorem}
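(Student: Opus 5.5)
The plan is to reduce everything to the variance recursion \eqref{variance}, specialised to the pairs $(P^L_w,P^R_w)$. Write $a_w=D^2[P^L_w]$, $b_w=D^2[P^R_w]$ and $c_w=D^2[P_w]$. Then $c_w=\frac12 a_w+\frac12 b_w+1$. By Proposition \ref{TT} and \eqref{system}, appending $L$ replaces $(a,b)$ by $(c,b)$, and appending $R$ replaces $(a,b)$ by $(a,c)$. The initial data are $a_\eps=b_\eps=0$ and $c_\eps=1$. So the variance along a word is an explicit affine dynamics on pairs of reals, and all four inequalities become statements about this two-dimensional system.

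For the length bounds I would argue as follows.

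\emph{Lower bound.} Each step replaces one coordinate of the pair by $c$. Induction shows $a,b\ge 2-2^{1-k}$ after $k$ steps. The extremal case is the constant word, whose one coordinate is $D^2[P_{L^{k-1}}]$ and other coordinate is $0$; this gives $c\geq 2-2^{-k}$, in agreement with \eqref{minD}. More carefully, I would show by induction that $\min(a,b)\ge 0$ and $\max(a,b)\geq 2-2^{1-k}$, which gives $c\ge\frac12\max+1\ge 2-2^{-k}$.

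\emph{Upper bound.} Here I use the quantity $M_k=\max_{\ell(w)=k}c_w$. After the step, the new pair is $(c_{\text{old}}, \text{one old coordinate})$. That old coordinate is itself a $c$-value of some shorter prefix, or of $P_1$, since each coordinate is always the variance of a previous tree in the chain. This gives the recursion $M_{k}\le \frac12 M_{k-1}+\frac12 M_{k-2}+1$, with equality for the alternating word. Solving this linear recursion with roots $1$ and $-\frac12$ gives the closed form $\frac{2k}{3}+\frac89+\frac{(-1)^k}{9\cdot 2^k}$. The constants are fixed by $M_{-1}=0$ and $M_0=1$.

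For the block bounds I would use \eqref{D2}: inserting extra letters into existing blocks does not decrease the variance. The lower bound then follows by shrinking every block to length one, which produces the alternating word of length $\ell_b(w)$ and the formula above.

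For the strict upper bound, I would analyse one block. Along a block of $n$ equal letters $L$, the coordinate $b$ stays fixed and $a$ converges to the fixed point $a^*=b+2$ from below. After the block, $c=\frac12 a+\frac12 b+1<b+2$. At a block change, the old $a$ (or the last $c$) becomes the frozen coordinate. So I would carry the invariant that after $j$ blocks both coordinates are $<2j$ and the current one is $<2j$ as well. Then the next block has frozen value $<2j$, so its limit is $<2j+2$. Every step increases by a contraction toward the fixed point, so the value is never attained and the inequality is strict. Letting all block lengths tend to infinity simultaneously gives the limit $2k$ in \eqref{limlim}, which shows sharpness.

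\emph{Main obstacle.} The hard step is justifying the block invariant precisely, namely that the frozen coordinate at the start of block $j+1$ is strictly below $2j$, with the correct base case. The first block starts from $(0,0)$ with $c=1$ and has limit $2$. I would first check this carefully by tracking the pair $(c,\text{frozen})$ explicitly. Sharpness of the weak inequalities is immediate, since the constant and alternating words realise equality in the recursions.
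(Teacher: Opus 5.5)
Your proposal is correct and follows essentially the paper's route. Like the paper, you run the variance recursion \eqref{variance} along the pair dynamics, take constant words for the minimum and the alternating word (via the two-step recursion) for the maximum, use \eqref{D2} for the block lower bound, and use a block-by-block fixed-point analysis for the strict bound $2\ell_b(w)$ and its sharpness. There are two small points to tidy. First, your opening claim $a,b\ge 2-2^{1-k}$ is false as stated, since for $L^k$ one coordinate stays $0$; keep only your corrected $\min\ge 0$, $\max\ge 2-2^{1-k}$ version. Second, bounding the older coordinate by $M_{k-2}$ requires $M_j$ to be nondecreasing in $j$, which follows from \eqref{monovar} or from the closed form being increasing.
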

\begin{remark}
 For any $w\in\{L,R\}^\N$, the sequence $\{P_{w(t)}\}_{t=-1}^{\infty}$ is a peacock,\cite{Hirsch}, that is, $P_{w(t)}$ are marginals of a martingal. Hence, for any convex function $\psi\colon\Z\to\R$,  
\begin{equation}\label{r}\textstyle \int\limits_{\Z}\psi d P_{w(t)}\leq \int\limits_{\Z}\psi d P_{w(t+1)}. \end{equation} 
Due to Kellerer, \cite{Kellerer}, \eqref{r} is also sufficient for the existence of the  martingale which admits $P_{w(t)}$. Property \eqref{r} forces the monotonicity of the span of the support and the monotonicity of the variance, both in the sense of non-strict inequalities. In the case of the family $P_{w(t)}$ the inequalities are strict, for instance, $diam(\supp(P_{w(t+1)}))=diam(\supp(P_{w(t)}))+1$. 
\end{remark}
\subsection{Asymptotic behaviour} 
The asymptotic behaviour of $X^w_t$  depends on whether the variance $D^2[X^w_t]$ is bounded or tends to infinity. It is shown in \cite{Hosten}, \cite{sw2023}, that the sequence $\mu_{t_k}$ satisfies the central limit theorem (CLT)  if the number of blocks in the binary representation $(t_k)_2$ goes to infinity. In this section we will put more attention to the complementary  scenario $\sup\limits_{t\in\N}D^2[X^w_t]<\infty$. In this setting, the martingale $X^w_t$ is almost surely convergent. 

\textbf{Bounded variance.} Assume that $\sup\limits_{t\in\N}D^2[P_{w(t)}]<+\infty$. This is the case of the sequences $w\in\{L,R\}^{\N}$ that are eventually constant. We have: $\sup\limits_{t\in\N}E|X^w_t|^2<\infty$, and by Doob theorem, the almost sure limit $X^w_{\infty}=\lim\limits_{t\to\infty}X^w_t$ exists, and equals to the $L^2$- norm limit. The limit can be written explicitly, and the asymptotic distributions form the family $\{\mu_t\}_{t\in T}\cup \{\hat{\mu}_t\}$,  where $\hat{\mu}_t$ are given by the reflection: $\hat{\mu}_t(d)=\mu_t(-d)$, $d\in\Z$.
\begin{theorem}\label{As}
  For any word $v\in\{L,R\}^{\star}$, $\lim\limits_{n\to\infty} P_{vRL^n}= \mu_v$ and $\lim\limits_{n\to\infty} P_{vLR^n}= \hat{\mu}_v$. 
Additionally: $\lim\limits_{n\to\infty}P_{L^n}=\mu_1$ and 
$\lim\limits_{n\to\infty}P_{R^n}=\hat{\mu}_1$. Hence:
 $$\{\LL(X^w_{\infty})\colon w \in\{L,R\}^\N\mbox{ is eventually constant}\}=\{\mu_t\}_{t\in T}\cup \{\hat{\mu}_t\}_{t\in T}.$$

\end{theorem}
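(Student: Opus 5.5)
The plan is to exploit the fact that along a tail of constant letters the pair dynamics of Proposition~\ref{TT} freezes one coordinate. The other coordinate is then driven by an affine contraction whose fixed point is an explicit convolution with $\mu_1$ or its reflection.

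\textbf{The case $vRL^n$.} Write $(P^L_u,P^R_u)$ for the pair attached to $u$, in the normalization of \eqref{Pt}. By Proposition~\ref{TT}, applying $\Phi_R$ to the pair of $v$ gives $P^R_{vR}=P_v$. Every further application of $\Phi_L$ leaves the right coordinate equal to $P_v$. Hence $Q_n:=P_{vRL^n}$ satisfies
$$Q_{n+1}=\Phi(Q_n,P_v)=\tfrac12\sigma_{-1}Q_n+\tfrac12\sigma_{1}P_v .$$
Unrolling this recursion gives
$$Q_n=2^{-n}\sigma_{-n}Q_0+\sum_{k=0}^{n-1}2^{-(k+1)}\sigma_{1-k}P_v .$$
The first term has $l^1$-norm $2^{-n}\to0$. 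The sum converges in $l^1$ to $\sum_{k\ge0}2^{-(k+1)}\sigma_{1-k}P_v$. By \eqref{mu1}, $\mu_1(1-k)=2^{-(k+1)}$, so this limit is exactly $\mu_1\ast P_v$, which equals $\mu_v$ by Corollary~\ref{RR}.

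\textbf{The cases $L^n$ and $R^n$.} For $P_{L^n}$ the same argument applies starting from the pair $(P_1,P_1)=(\delta_0,\delta_0)$ at $3$. The right coordinate stays $\delta_0$, so the limit is $\mu_1\ast\delta_0=\mu_1$. The case $R^n$ follows by the reflection $P_w(d)=P_{\overline w}(-d)$ of Theorem~\ref{tsym}.

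\textbf{The case $vLR^n$.} The frozen coordinate is now the left one, equal to $P_v$, and
$$Q_{n+1}=\tfrac12\sigma_{-1}P_v+\tfrac12\sigma_1Q_n .$$
The same unrolling gives the limit $\sum_{k\ge0}2^{-(k+1)}\sigma_{k-1}P_v=\hat\mu_1\ast P_v$. This is the reflection of $\mu_1\ast\hat P_v=\mu_1\ast P_{\overline v}=\mu_{\overline v}$, again using Theorem~\ref{tsym}.

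\textbf{Main obstacle: the labelling of the reflected limit.} I expect the identification in the last case to be the delicate point. The computation gives $\hat\mu_{\overline v}$, which equals $\hat\mu_v$ only when $P_v$ is symmetric. So I would either read the statement's $\hat\mu_v$ up to the relabelling $v\mapsto\overline v$, or state the limit as $\hat\mu_{\overline v}$. In either case the final set equality is unaffected, because $v\mapsto\overline v$ is a bijection of $\{L,R\}^\star$.

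\textbf{The set equality.} An eventually constant $w$ is one of $L^\infty$, $R^\infty$, $vRL^\infty$ or $vLR^\infty$. For such $w$ the martingale $X^w_t$ converges almost surely, and almost sure convergence implies convergence in law. Therefore $\LL(X^w_\infty)$ coincides with the pointwise limit of $P_{w(t)}$ computed above. As $v$ ranges over $\{L,R\}^\star\cong T\setminus\{1\}$, and we add the cases $L^\infty$ and $R^\infty$ for $t=1$, these limits exhaust $\{\mu_t\}_{t\in T}\cup\{\hat\mu_t\}_{t\in T}$.
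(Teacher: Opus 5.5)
Your proof is correct and follows the paper's argument: freezing one coordinate of the pair and unrolling $\Phi$ gives exactly the geometric sum \eqref{iterating}, whose limit is $\mu_1\ast P_v=\mu_v$. The paper handles $L^n$ and $R^n$ through the limiting stopping times $\tau(\pm1)$ rather than through the recursion with frozen coordinate $\delta_0$, but that difference is inessential.

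Your caveat about the $vLR^n$ case is right, and it exposes a genuine slip in the statement. The paper's own symmetry argument gives $P_{vLR^n}(d)=P_{\overline v RL^n}(-d)\to\hat\mu_{\overline v}(d)$, so the limit is $\hat\mu_{\overline v}$ in general, not $\hat\mu_v$. A concrete counterexample is $v=L$. By \eqref{PWLR}, $P_{LLR^n}(-3)=\frac18$ for every $n$. On the other hand, $\hat\mu_L(-3)=\mu_5(3)=0$, since $\max\supp(\mu_5)=\max\supp(\mu_1)+\max\supp(P_5)=2$.

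The final set equality survives unchanged, exactly as you argue, because $v\mapsto\overline v$ is a bijection of $\{L,R\}^\star$.
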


\begin{proof}
 We start with the constant sequences:  $w^1=R^{\infty}$ and $w^2=L^{\infty}$. We have 
$$T_{R^{n+1}}=[\bullet, T_R^n],\ T_{L^{n+1}}=[ T_L^n,\bullet],\mbox{ where }T_{R^0}=T_{L^0}=T_{\eps}=[\bullet,\bullet]. $$
Given $k\in\Z$, let 
$$\tau(k)=\inf\{i\in\N^+\colon \xi_i=k\}.$$ The limits of stopping times $\tau_{R^n}$, $\tau_{L^n}$, are:
$$\tau_{R^{\infty}}:=\lim\limits_{n\to\infty}\tau_{R^n}=\tau(-1),\ \tau_{L^{\infty}}:=\lim\limits_{n\to\infty}\tau_{L^n}=\tau(1).$$
 The limit $X_\infty$ is explicit: $ X_\infty^{w_1}=S_{\tau_{R^{\infty}}}=S_{\tau(-1)}\mbox{ and }X_\infty^{w_2}=S_{\tau_{L^{\infty}}}=S_{\tau(1)},$
and hence
$$\LL( X_{\infty}^{w_1})=\LL(S_{\tau(-1)})=\hat{\mu_1}\mbox{ and }\LL( X_{\infty}^{w_2})=S_{\tau(1)}=\mu_1.$$
Now, fix an arbitrary word $v\in\{L,R\}^{\star}$. We have: $T_v=[T_v^-,T_v^+]$,  $T_{vR}=[T_v^-,T_v]$, $T_{vRL}=[ T_{vR} ,T_v]$. In particular: $P_{vR}=\Phi(P_v^L,P_v)$, and:
$$P_{vRL}=\Phi(P_{vR},P_v)=\frac12\cdot \sigma_{-1}(P_{vR}) + \frac12\cdot \sigma_{1}(P_v)=\frac14\cdot \sigma_{-2}(P_v^L) + \frac14\cdot \sigma_{0}(P_{v}) + \frac12\cdot \sigma_{1}(P_{v}).$$
By iterating the above, we get:
\begin{equation}\label{iterating}
P_{vRL^n}=\frac{1}{2^{n+1}}\cdot \sigma_{-(n+1)}(P_v^L) + \sum\limits_ {i=-(n-1)}^1  \frac{1}{2^{2-i}}\cdot \sigma_{i}(P_v).
\end{equation}
  The  law  $\LL(X^w_{\infty})$, where $w=vRL^{\infty}$,  is the limit of the above sum:
$$\LL(X^w_{\infty})=\sum\limits_{i=-\infty}^{1} \frac{1}{2^{2-i}}\cdot \sigma_{i}(P_v)=\mu_1 * P_{v}=\mu_{v}.$$
To show $\LL(X_{\infty}^{w_2})=\hat{\mu_{v}}$, where $w_2=vLR^{\infty}$, we can repeat the above reasoning, or just use the symmetry $P_{\overline{w}}(d)=\hat{P}(d)$.
\end{proof}
\begin{observation} Given  $v\in\{L,R\}^\star$, one may note that $\tau_{vRL^n}\to \tau$ a.s., where:
$$\tau = \tau(1) +  \tau_v(\xi_{\tau_1+1},\xi_{\tau_1+2},\cdots)=\tau(1)+\tau_v\circ \sigma_{\tau(1)}\circ (\xi_1,\xi_2,\dots),$$
where $\sigma_{d}$ is the right shift on $\{-1,1\}^\N$. In words, the stopping rule for $\tau=\lim\limits_{n\to\infty}\tau_{vRL^n}$ is  given by: wait for the first $i\in\N^+$ with $\xi_i=1$, and next 
switch the stopping rule to $\tau_v$. Additionally, by the strong Markov property of $S_n$, 
$$\LL(S_{\tau})=\LL\left(\sum\limits_{i=1}^{\tau(1)}\xi_i+\sum\limits_{i=\tau(1)+1}^{ \tau_v\circ \sigma_{\tau(1)}(\xi_1,\xi_2,\dots)}\xi_i\right)=\LL(S_{\tau(1)}) \ast \LL( S_{\tau_{v}})=\mu_1\ast P_{v}.$$
\end{observation}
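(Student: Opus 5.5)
The statement to prove is the Observation: $\tau_{vRL^n}\to\tau=\tau(1)+\tau_v\circ\sigma_{\tau(1)}$ almost surely, and $\LL(S_\tau)=\mu_1\ast P_v$.

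\textbf{Step 1: describe the trees.} We have $T_{vR}=[T_v^-,T_v]$, and by \eqref{td} with the letter $L$, $T_{vRL^{n+1}}=[T_{vRL^n},T_v]$. Indeed, the right subtree of $T_{vRL}$ is $T_v$, and each further $L$ keeps the right subtree intact. By induction on $n$, the tree $T_{vRL^n}$ is a left spine of length $n+1$. Starting from the root, reading $-1$ moves down the spine. The first $+1$ read at spine depth $j\le n$ enters a copy of $T_v$, and the bottom of the spine is $T_v^-$.

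\textbf{Step 2: write the stopping time via Definition \ref{tt}.} On $\{\tau(1)\le n\}$ we get
$$\tau_{vRL^n}(\xi_1,\xi_2,\dots)=\tau(1)+\tau_v(\xi_{\tau(1)+1},\xi_{\tau(1)+2},\dots).$$
On the complement, the walk has read $n+1$ minus signs and then follows $T_v^-$. Since $\PP[\tau(1)>n]=2^{-n}\to0$, we get $\tau(1)<\infty$ almost surely. For almost every $\omega$, the two expressions agree for all $n\ge\tau(1)(\omega)$, so the sequence is eventually constant and equal to $\tau$. This gives the first claim, with no real obstacle. Also $S_{\tau_{vRL^n}}\to S_\tau$ almost surely, consistent with the proof of Theorem \ref{As}.

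\textbf{Step 3: compute the law.} Decompose
$$S_\tau=S_{\tau(1)}+\sum_{i=\tau(1)+1}^{\tau(1)+\tau_v\circ\sigma_{\tau(1)}}\xi_i.$$
By the strong Markov property (or elementary independence of i.i.d.\ coordinates after a stopping time), the shifted sequence $(\xi_{\tau(1)+k})_{k\ge1}$ is independent of $\FF_{\tau(1)}$ and has law $\PP$. Hence the second summand is independent of $S_{\tau(1)}$ and distributed as $S_{\tau_v}$, whose law is $P_v$ by Theorem \ref{RE}.

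The main point to check is that $\LL(S_{\tau(1)})=\mu_1$. Here $\tau(1)$ is geometric with $\PP[\tau(1)=m]=2^{-m}$, and $S_{\tau(1)}=-(m-1)+1=2-m$. So $\PP[S_{\tau(1)}=k]=(1/2)^{2-k}$ for $k\le1$, which matches \eqref{mu1}. Combining the two summands gives $\LL(S_\tau)=\mu_1\ast P_v$, which equals $\mu_v$ by Corollary \ref{RR}.
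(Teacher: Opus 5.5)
Your proposal is correct and follows the same route as the paper: you read off from the recursion $T_{vRL^{n+1}}=[T_{vRL^n},T_v]$ that $\tau_{vRL^n}$ eventually equals $\tau(1)+\tau_v\circ\sigma_{\tau(1)}$, and then you combine the strong Markov property with $\LL(S_{\tau(1)})=\mu_1$ (already used in the proof of Theorem \ref{As}) to obtain the convolution $\mu_1\ast P_v$. Two harmless details: the formula for $\tau_{vRL^n}$ in fact holds on all of $\{\tau(1)\le n+1\}$, so describing the complement of $\{\tau(1)\le n\}$ as ``$n+1$ minus signs'' is off by one; and your upper summation limit $\tau(1)+\tau_v\circ\sigma_{\tau(1)}$ is the correct one, which fixes a typo in the paper's displayed formula.
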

\textbf{Unbounded variance}. If $w\in\{L,R\}^\N$ satisfies $l_b(w_1\dots w_t)\to+\infty$, then, after the renormalization, the sequence $\mu_{w(t)}$ converges weakly to the standard normal distribution:
 $$\frac{\mu_{w(t)}}{\sqrt{D^2[\mu_{w(t)}]}}\to N(0,1),$$
 see \cite{sw2023}, \cite{Hosten} for more details. As $\mu_t=\mu_1\ast P_t$, and $D^2(\mu_1)=2<\infty$, we get: 
$$\frac{P_{w(t)}}{\sqrt{D^2[P_{w(t)}]}}\to N(0,1).$$
We note that there are various results on the central limit theorem for martingales, \cite{Hall}, and stopped random walks, \cite{Gut}, although this topic is beyond the scope of the present paper.

\section{The tree asymmetry  and Cusick conjecture.}\label{CUS}
Considering the dynamics behind the equations \eqref{system}, it is natural to conjecture:
\begin{equation}\label{medP}
P_t(\N)\geq\frac12,\ t\in\N.
\end{equation}
Equivalently: $P_t(-\N)\geq\frac12$, $t\in\N$, by the symmetries. The proof of \eqref{medP} is not obvious. It is worth to mention that conjecture \eqref{medP} is presented in Section 3.4 of \cite{drmota} along with numerical verification, and the  authors note that \eqref{medP} implies $\mu_t(\N)\geq\frac12$ and $\mu_t(-\N)\geq\frac12$ (Lemma 5 in \cite{drmota}). Within our framework, this observation is the property of the limit:
$$\mu_t(\N)=\lim\limits_{n\to\infty}P_{tRL^n}(\N)\mbox{ and }\hat{\mu}_t(\N)=\lim\limits_{n\to\infty}P_{tLR^n}(\N),$$
and equation \eqref{medP} is equivalent to the median-preserving property of the martingale:
\begin{equation}\label{medPP}
\PP[X^w_t\geq 0]\geq\frac12\mbox{ and }\PP[X^w_t\leq 0]\geq\frac12,\mbox{ for  }w\in\{L,R\}^\N,\ t\in\N.
\end{equation}
If the above holds true, then by our hierarchical construction the stopped random walk $X^w_t$ splits the mass of every node of the tree in such a way that the median of the attached subtree is preserved.  Hence,  by \eqref{PWLR} and by the construction:
\begin{observation}
If \eqref{medPP} holds true, then for any $v\in\{L,R\}^\star$, for any natural $k\leq |v|_R+1$, $l\leq |v|_L+1$, we have: $P_v[\Z_{\geq k}]\geq (\frac{1}{2})^{k+2}$ and $P_v[\Z_{\leq -l}]\geq (\frac{1}{2})^{l+2}$.
\end{observation}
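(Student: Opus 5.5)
The plan is to follow the rightmost branch of the tree $T_v$ from the root. Along that branch the walk has moved $j$ steps to the right. The subtree hanging at that node is again one of the trees $T_u$ (or a leaf), so the median-preserving hypothesis \eqref{medPP} tells us that, from there on, the walk ends at or to the right of its current position with probability at least $\frac12$. The symmetric statement for $\Z_{\leq -l}$ follows the same way along the leftmost branch, or directly from the symmetry $P_w(d)=P_{\overline{w}}(-d)$ of Theorem \ref{tsym}.

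\textbf{Step 1 (structure of subtrees).} First I would show by induction on $\ell(v)$, using the recursive step \eqref{td}, that every subtree of $T_v$ rooted at an internal node is either $\bullet$ or equal to $T_u$ for some $u\in\{L,R\}^\star$ with $u\preceq v$ (or $T_1=\bullet$). For $v=\eps$ this is clear. If it holds for $T_v=[T_v^-,T_v^+]$, then $T_{vL}=[T_v,T_v^+]$ and $T_{vR}=[T_v^-,T_v]$ have as immediate subtrees $T_v$ together with one of $T_v^\pm$. The subtrees of these are already covered by the hypothesis. The root subtree is $T_{vL}$ or $T_{vR}$ itself. Consequently, by Theorem \ref{RE}, the law of the walk stopped inside any such subtree, started from $0$, is $P_u$ for some $u$, or $\dd_0$.

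\textbf{Step 2 (conditioning along the rightmost branch).} Fix $k\leq |v|_R+1$. From the support discussion, the rightmost branch of $T_v$ has length exactly $|v|_R+1$, because $\{S_{\tau_v}=|v|_R+1\}=\{\xi_1=\dots=\xi_{|v|_R+1}=1\}$. Hence the node reached by $\xi_1=\dots=\xi_k=1$ exists in $T_v$; it is either internal or a leaf. Iterating Definition \ref{tt} $k$ times, on the event $A_k=\{\xi_1=\dots=\xi_k=1\}$ we have
\[
S_{\tau_v}=k+S'_{\tau_{T'}}, \qquad S'_{\tau_{T'}}:=S_{\tau_{T'}}\circ(\xi_{k+1},\xi_{k+2},\dots),
\]
where $T'$ is the subtree at that node. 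The variable $S'_{\tau_{T'}}$ is independent of $A_k$, and by Step 1 its law is $P_u$ for some $u$, or $\dd_0$. Under \eqref{medP}, equivalently \eqref{medPP}, this gives $\PP[S'_{\tau_{T'}}\geq 0]\geq\frac12$; the case $\dd_0$ is trivial.

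\textbf{Step 3 (conclusion).} Combining the two steps,
\[
P_v[\Z_{\geq k}]\geq \PP[A_k]\cdot\PP[S'_{\tau_{T'}}\geq 0]\geq \left(\frac12\right)^{k}\cdot\frac12\geq\left(\frac12\right)^{k+2}.
\]
The leftmost branch, or the reflection $v\mapsto\overline{v}$ of Theorem \ref{tsym}, gives $P_v[\Z_{\leq -l}]\geq(\frac12)^{l+2}$ for $l\leq |v|_L+1$.

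The only step needing care is Step 1: the claim that every subtree is some $T_u$, so that the median hypothesis, which is stated only for the measures $P_u$, can be applied at an interior node. Once that is in place, the rest is just the recursive definition of $\tau_T$ together with independence of the coordinate variables.
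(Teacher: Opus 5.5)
Your proof is correct and is essentially the paper's argument made explicit. The paper justifies the observation in one line: the node at depth $k$ on the extreme branch carries mass $(\frac12)^k$ by \eqref{PWLR} and the construction, and the subtree attached there has its median preserved. Your Step 1, showing that every subtree of $T_v$ is $\bullet$ or some $T_u$ with $u\preceq v$, is exactly what ``by our hierarchical construction'' leaves implicit. Your argument in fact gives the sharper bound $P_v[\Z_{\geq k}]\geq(\frac12)^{k+1}$, which is attained for example at $v=R$, $k=1$, so the stated $(\frac12)^{k+2}$ follows with room to spare.
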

The  weak inequalities \eqref{medP}/\eqref{medPP} seem to be not sufficient to force the strong inequality from the Cusick problem. Based on the tree dynamics developed in this paper, we propose a natural generalization of Cusick conjecture. Fix $w\in\{L,R\}^\N$. At the beginning, the tree $T_{w(0)}=[\bullet,\bullet]$ is symmetric, and $P_{w(0)}=\frac12\delta_{-1}+\frac12\delta_1$. Next, the first letter introduces the asymmetry:
$$P_{L}(\N)=\frac34>\frac12\mbox{ and }P_{R}(-\N)=\frac34>\frac12.$$
We conjecture that the above asymmetry persists during the whole evolution process:
\begin{conjecture}\label{Azyan} For any $v\in\{L,R\}^{\star}$,
\begin{equation}\label{concon}
P_{Lv}(\N)>\frac12\mbox{ and }P_{Rv}(-\N)>\frac12,.
\end{equation}
\end{conjecture}
In words: once the tree begins to grow, one side becomes heavier than $\frac12$, and it remains so  throughout the whole process. The Cusick conjecture is the special case of this claim:
\begin{lemma}
If $P_{Lw}(\N)>\frac12$, $w\in\{L,R\}^\star$, then $\mu_v(\N)>\frac12$ for any $v\in \{L,R\}^\star$.
\end{lemma}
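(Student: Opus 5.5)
The plan is to use the explicit representation of $\mu_v$ from the proof of Theorem \ref{As}. The key observation is that the tail terms separating $P_{vRL^n}$ from $\mu_v=\mu_1\ast P_v$ carry no mass on $\N$ once $n$ is large. Hence $\mu_v(\N)$ equals $P_{vRL^n}(\N)$ \emph{exactly}, not merely in the limit, for all large $n$. This matters because a bare limit $\lim_n P_{vRL^n}(\N)$ would only give $\mu_v(\N)\geq\frac12$; the equality at finite $n$ is what preserves the strict inequality. Then the reversal symmetry of Theorem \ref{tsym} converts $vRL^n$ into a word beginning with $L$, where the hypothesis applies.

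\textbf{Step 1: the tail vanishes on $\N$.} Fix $v\in\{L,R\}^\star$ and recall $\mu_v=\mu_1\ast P_v=\sum_{i\leq 1}2^{i-2}\sigma_i(P_v)$. Subtracting \eqref{iterating} gives
$$\mu_v-P_{vRL^n}=\sum_{i\leq -n}2^{i-2}\sigma_i(P_v)-2^{-(n+1)}\sigma_{-(n+1)}(P_v^L).$$
Evaluating on $\N$, and using $\sigma_i(P)(\N)=P(\Z_{\geq -i})$, the right-hand side becomes
$$\sum_{i\leq -n}2^{i-2}P_v(\Z_{\geq -i})-2^{-(n+1)}P_v^L(\Z_{\geq n+1}).$$
Both $P_v$ and $P_v^L$ have finite support. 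For $P_v$, the support lies in $[-(|v|_L+1),|v|_R+1]$ by the support computation preceding \eqref{PWLR}. For $P_v^L=\E(T_v^-)$, the support is bounded by the height of the subtree $T_v^-$, which is at most $\ell(v)+1$. Therefore, for every $n>\ell(v)+1$ each term vanishes, and $\mu_v(\N)=P_{vRL^n}(\N)$.

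\textbf{Step 2: apply the hypothesis.} By the reversal symmetry of Theorem \ref{tsym}, $P_{vRL^n}=P_{\overleftarrow{vRL^n}}=P_{L^nR\overleftarrow v}$. For $n\geq1$ this word is $Lw$ with $w=L^{n-1}R\overleftarrow v$. The hypothesis then gives $\mu_v(\N)=P_{Lw}(\N)>\frac12$.

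\textbf{Step 3: all $t$.} The statement is phrased for $v\in\{L,R\}^\star$, which covers all odd $t\geq 3$. For completeness, $\mu_1(\N)=\mu_1(0)+\mu_1(1)=\frac14+\frac12=\frac34$ by \eqref{mu1}, and even $t$ reduce to odd ones by \eqref{even}. This yields the Cusick conjecture for every $t\in\N^+$.

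The only delicate point is Step 1: it requires checking the shift conventions in \eqref{iterating} and the support bound for $P_v^L$, so that the correction terms really vanish on $\N$ for large $n$.
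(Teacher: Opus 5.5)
Your proof is correct and follows the paper's argument. You use \eqref{iterating} and the finite supports of $P_v$ and $P_v^L$ to show that $P_{vRL^n}(\N)=\mu_v(\N)$ exactly for all large $n$. Your threshold is $n>\ell(v)+1$, via the subtree-height bound; the paper takes $n\geq \ell_R(v)+2$, via $\max\supp(P_v^L)\leq\max\supp(P_v)$. You then apply the reversal symmetry $P_{vRL^n}=P_{L^nR\overleftarrow{v}}$ together with the hypothesis, and your remark that exact equality at finite $n$ (not a bare limit) is what preserves the strict inequality is precisely the point of the paper's proof.
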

\begin{proof}
Fix $v\in \{L,R\}^\star$. We have $\mu_v(\N)=\lim\limits_{n\to\infty}P_{vRL^n}(\N)$, and, by \eqref{iterating},
$$P_{vRL^n}(\N)=(\frac12)^{n+1} \sigma_{(-n-1)}(P^L_v)(\N)+(\frac12)^{n+1} \sigma_{(1-n)}(P_v)(\N) + \dots+(\frac12)^2 \sigma_{0}(P_v)(\N)+\frac12 \sigma_{1}(P_v)(\N).$$
Additionally: $\sigma_{-k}(P_v)(\N)>0\Leftrightarrow P_v(\Z_{\geq k})>0\LR k\leq  \ell_R(v)+1$, and
 $$\max(\supp(P^L_v))\leq \max(\supp(P_v)).$$
Thus, put $k:=\ell_R(v)+2$, for which we have $P_v(\Z_{\geq k})=0$, $P^L_v(\Z_{\geq k})=0$, and hence
$$\mu_v(\N)=\lim\limits_{n\to\infty}P_{vRL^n}(\N)=P_{vRL^k}(\N).$$
By the reverse property, and by the lemma assumption,  $P_{vRL^k}(\N)=P_{L^kR\overleftarrow{v}}(\N)>\frac12$.
\end{proof}
Coming back to the language of binary digits, by Observation \ref{binarr}:
\begin{conclusion}
If $P_t(\N)>\frac12$ for any odd $t=(t_1t_2\dots t_n)_2$ with $t_2=0$, then $\mu_t(\N)>\frac12$ for any $t\in\N$. 
\end{conclusion}
Our numerical experiments support  \eqref{concon} (and \eqref{medP}). Let $T[3,K]:=\{s\in T | 3\leq s\leq K\}$. For $K=12\ 000\ 001$, we have verified that  the global minima of the function
$$V\colon  T[3,K] \ni t \longmapsto \sum\limits_{d=0}^{\infty}P_t(\{d\}) \in [0,1] $$
satisfy 
$$V(t)=\frac{1}{2} \mbox{ and } W_1(t)=R,$$
where $t=W_1(t)...W_k(t)\in\{L,R\}^\star$. More precisely, the minimum $V(t)=\frac12$ is attained at  $982$ odd integers with $3\leq t\leq12 000 001$ (and the second digit in the binary representation is always "one"). Below we list the first few minimizers. While the symmetries provide some insight,  a general pattern for finding $t$ with $V(t)=\frac12$ is not obvious.

\begin{table}[h]
\centering
\scriptsize
\setlength{\tabcolsep}{3pt}
\begin{tabular}{c c c|c c c|ccc}
\midrule
$\ell(w)$ & $t$ & $w(t)$ & $\ell(w)$ & $t$ & $w(t)$ & $\ell(w)$ & $t$ & $w(t)$ \\
\midrule

0 & 3   & $\varepsilon$ & 7 & 447  & RLRRRRR   & 9 & 1919 & RRLRRRRRR  \\
1 & 7   & R             & 7 & 479  & RRLRRRR   & 9 & 1975 & RRRLRRLRR  \\
2 & 15  & RR            & 7 & 495  & RRRLRRR   & 9 & 1983 & RRRLRRRRR \\
3 & 27  & RLR           & 7 & 503  & RRRRLRR   & 9 & 2015 & RRRRLRRRR \\
3 & 31  & RRR           & 7 & 507  & RRRRRLR   & 9 &  2031 & RRRRRLRRR \\
4 & 55  & RLRR          & 7 & 511  & RRRRRRR   & 9 & 2039 & RRRRRRLRR\\
4 & 59  & RRLR          & 8 & 895  & RLRRRRRR  & 9 & 2043 & RRRRRRRLR \\
4 & 63  & RRRR          & 8 & 951  & RRLRRLRR  & 9 &  2047 & RRRRRRRRR \\
5 & 111 & RLRRR         & 8 & 959  & RRLRRRRR  & 9 & 2039 & RRRRRRLRR \\
5 & 119 & RRLRR         & 8 & 991  & RRRLRRRR  & 9 & 2043 & RRRRRRRLR \\
5 & 123 & RRRLR         & 8 & 1007 & RRRRLRRR  & 9 & 2047 & RRRRRRRRR \\
5 & 127 & RRRRR         & 8 & 1015 & RRRRRLRR  &   10 & 3583 & RLRRRRRRRR       \\
6 & 223 & RLRRRR        & 8 & 1019 & RRRRRRLR  &   10 & 3807 & RRLRRLRRRR         \\
6 & 239 & RRLRRR        & 8 & 1023 & RRRRRRRR  &  10 & 3823 & RRLRRRLRRR           \\
6 & 247 & RRRLRR        &    9 & 1791 & RLRRRRRRR          &  10 & 3831 & RRLRRRRLRR           \\
6 & 251 & RRRRLR        &     9 & 1903 & RRLRRLRRR          &   10 & 3839 & RRLRRRRRRR           \\
6 & 255 & RRRRRR        &     9 & 1911 & RRLRRRLRR      &   10 & 3951 & RRRLRRLRRR           \\

\bottomrule
\end{tabular}
\caption{The first few words with $V(t)=P_t(\N)=\frac12$  }
\end{table}

\end{document}